    \newtheorem{thm}{Theorem}[section]
    \newtheorem{cor}[thm]{Corollary}
    \newtheorem{lem}[thm]{Lemma}
    \theoremstyle{remark}
    \newtheorem{rem}[thm]{Remark}
    \newcommand{\ga}{\alpha}
    \newcommand{\gb}{\beta}
    \newcommand{\gd}{\delta}
    \newcommand{\ep}{\varepsilon} 
    \newcommand{\gm}{\gamma} 
    \newcommand{\gk}{\kappa}
    \newcommand{\gl}{\lambda}
    \newcommand{\go}{\omega}
    \newcommand{\gs}{\sigma}
    \newcommand{\gt}{\theta}
    \newcommand{\gz}{\zeta}
    \newcommand{\Exp}{\mathbb{E}}
    \newcommand{\ito}{It\^o}
    \newcommand{\ind}{\mathbbm{1}}      
    \renewcommand{\over}[1]{\overline{#1}_{t_0}}
    \renewcommand{\Pr}{\mathbb{P}}
    \newcommand{\Qr}{\mathbb{Q}}
    \newcommand{\km}{K^{-}}
    \newcommand{\kp}{K^{+}}
    \newcommand{\ks}{K^{*}}
    \newcommand{\ka}{\hat{K}}
    \newcommand{\ov}[1]{\overline{#1}}
    \newcommand{\un}[1]{\underline{#1}}
    \newcommand{\xb}{\overline{x}}
    \newcommand{\ms}{\check{\mu}}
    \newcommand{\mh}{\hat{\mu}}
    \newcommand{\mlow}{\underline{\mu}}
    \newcommand{\mup}{\overline{\mu}}
    \newcommand{\Dep}{D_{\ep}}
    \newcommand{\Depm}{D_{\ep}^-}
    \newcommand{\tp}{t_0}
    \newcommand{\Qh}{{\Qr}^z_x}
    \newcommand{\Eh}{\hat{\Exp}_x}
    \newcommand{\Eqh}{\hat{\Exp}^z_x}
    \newcommand{\Ph}{{\Pr}^z_x}
    \newcommand{\Prb}[2]{\emph{P} \left( #1, #2 \right)}
    \newcommand{\Prbb}[2]{\emph{P} \left( #1, #2; B \right)}
    \newcommand{\Prbm}[2]{P \left( #1, #2 \right)}  
    \newcommand{\Prbbm}[2]{P \left( #1, #2; B \right)} 
    \newcommand{\LM}{M^*(t)}
    \newcommand{\LB}{B^*(g,t)}
    \newcommand{\LG}{g^*_t(s)}
    \newcommand{\LL}{L^*(t)}
    \newcommand{\LC}{C^*(g,t)}
    \newcommand{\bmv}{\mbox{\boldmath$W$}}
    \newcommand{\mbf}[1]{\mbox{\boldmath$#1$}}
    \newcommand{\mbff}[1]{\mbox{\footnotesize\boldmath$#1$}}
    \newcommand{\norm}[1]{\left|\left|#1\right|\right|}
    \newcommand{\gp}{g_{+}}
    \newcommand{\gi}{g_{-}}
    \newcommand{\fp}{f_{+}}
    \newcommand{\fm}{f_{-}}
    \renewcommand{\cal}[1]{{\mathcal #1}}
\begin{document}

    \title{First Passage Densities and Boundary Crossing Probabilities for Diffusion Processes}
    \author{A.N. Downes\footnote{Department of Mathematics and Statistics, University of Melbourne, a.downes@ms.unimelb.edu.au}~ and K. Borovkov\footnote{Department of Mathematics and Statistics, University of Melbourne}}
    \date{}

    \maketitle

\begin{abstract}
    We consider the boundary crossing problem for time-homogeneous diffusions and general curvilinear boundaries. Bounds are derived for the approximation error of the one-sided (upper) boundary crossing probability when replacing the original boundary by a different one. In doing so we establish the existence of the first-passage time density and provide an upper bound for this function. In the case of processes with diffusion interval equal to $\mathbb{R}$ this is extended to a lower bound, as well as bounds for the first crossing time of a lower boundary. An extension to some time-inhomogeneous diffusions is given. These results are illustrated by numerical examples.
\end{abstract}

\emph{Keywords}: diffusion processes; boundary crossing; first passage time density.\\
\emph{2000 Mathematics Subject Classification}: Primary 60J60; Secondary 60J70.

\section{Introduction}

    Calculating the probability a given diffusion process will cross a given one- or two-sided boundary in a finite time interval is an important problem for which no closed-form solution is known except for a few special cases. Such boundary crossing probabilities arise in many applications, including finance (pricing of barrier options) and sequential statistical analysis. Since no closed form exact solution is immediately forthcoming in the general case, finding approximate solutions, together with approximation rates, becomes an important alternative approach. A possible pathway here is to replace the original boundary with a close one, for which obtaining boundary crossing probabilities is feasible, and then to obtain a bound for the error caused by using the approximating boundary.

    This approach was taken in \cite{Borovkov_etal_0305}, where an error bound was established for boundary crossing probabilities for the Brownian motion. It was shown there that under mild regularity conditions, the difference between the probabilities doesn't exceed a multiple of the uniform distance between the original and approximating boundaries, with the coefficient being an explicit function of the Lipschitz coefficient of the boundary. An earlier (asymptotic) bound, again in the Brownian motion case, was obtained in \cite{Potzelberger_etal_0301}, but under some additional superfluous conditions. In \cite{Wang_etal_0307}, the authors dealt with a special class of diffusion processes which can be expressed as piecewise monotone (not necessarily one-to-one) transformations of the standard Brownian motion. For such processes, boundary crossing problems could be reduced to similar ones for the Brownian motion process. Further references can be found in \cite{Potzelberger_etal_0301} and \cite{Wang_etal_0307}.

    Much work has also been done in the area of calculating the density of the first crossing time. Explicit formulae can be obtained for a limited number of specific pairs of diffusions and boundaries, such as in \cite{Giorno_etal_0389}, \cite{Daniels_0696}, \cite{Gutierrez_etal_0997} (see also references therein), where the existence of the density is usually assumed. Existence of the first crossing density was established in \cite{Fortet_xx43} under certain conditions on the diffusion and boundary. For flat boundaries and under certain smoothness conditions on the diffusion and drift coefficients (which, for example, do not apply to the Bessel process), \cite{Pauwels_0687} establishes the existence and smoothness of the first-passage density.

    In this paper we extend the work of \cite{Borovkov_etal_0305} to general diffusion processes. As an auxiliary result (which is also of independent interest), we establish the existence of, and bounds for, the density of the first passage time under mild conditions on the boundary and diffusion. Most of this work is focused on the time-homogeneous case, however we outline how some time-inhomogeneous processes can be treated as well. Section~\ref{sec:setup} introduces the processes involved and some other notation. Section~\ref{sec:fpt_density} proves the existence of a density for the first passage time, and provides upper and lower bounds for the density. This is used in Section~\ref{sec:bcp} to bound the difference between the crossing probabilities of two close boundaries. Section~\ref{sec:examples} provides some numerical examples.

\section{Setup and Notation}
\label{sec:setup}

    First we consider time-homogeneous diffusions; a possible generalisation is discussed at the end of Section~\ref{sec:fpt_density}. We begin with a diffusion $U_t$ governed by the stochastic differential equation (SDE)
        \begin{align}
        \label{eq:original_diff}
            dU_t = \nu (U_t) dt + \gs(U_t) dW_t,
        \end{align}
    where $W_t$ is a Brownian motion and $\gs(y)$ is differentiable and non-zero inside the diffusion interval (that is, the the smallest interval $I \subseteq \mathbb{R}$ such that $X_t \in I$ almost surely). As is well-known, one can transform the process to one with unit diffusion coefficient. This is achieved by (see e.g. \cite{Rogers_xx85}, p.161) defining
        \begin{align*}
            F(y) := \int_{y_0}^y \frac{1}{\gs(u)} du
        \end{align*}
    for some $y_0$ from the diffusion interval of $U_t$, and then considering $X_t := F(U_t)$. By \ito's formula, this process will have unit diffusion coefficient and a drift coefficient $\mu(y)$ given by the composition
        \[  \mu(y) = \left( \frac{\nu}{\gs} - \frac{1}{2} \gs'\right) \circ F^{-1}(y).  \]
    Any boundaries being considered must also be transformed accordingly. From here on we work with the transformed diffusion process $X_t$ governed by the SDE
        \begin{align}
        \label{eq:Xt_diff}
            dX_t = \mu(X_t) dt + dW_t, \;\; X_0 = x.
        \end{align}
    Conditions mentioned throughout refer to the transformed process $X_t$ and its drift coefficient $\mu$.

    Without loss of generality, we consider boundary crossing probabilities over a time interval $[0,1]$. The general case of an interval $[0,T]$ follows by a change of scale.

    An `upper' boundary for the process $X_t$ is a function $g(t)$ such that $g(0)>x$, while a `lower' boundary satisfies $g(0)<x$. When considering a two-sided boundary crossing problem with an upper boundary $\gp$ and lower boundary $\gi$, we assume that $\gp(t) > \gi(t)$ for $t \in [0,1]$.

    We will consider the following two cases only:
        \begin{enumerate}
            \item[] [A] The diffusion interval of $X_t$ is the whole real line $\mathbb{R}$.
            \item[] [B] The diffusion interval of $X_t$ is $\mathbb{R}_+ = [0, \infty)$.
        \end{enumerate}
    Where the specific interval is not important, the lower end-point (either $-\infty$ or $0$) will be denoted by $\ell$. When considering boundary-crossing problems for diffusions on $\mathbb{R}_+$, we assume the boundaries do not touch zero in the time interval $[0, 1]$. The results extend to diffusions with other diffusion intervals with one finite endpoint by employing appropriate transforms.

    We denote by $\Pr_x$ probabilities conditional on the process in question ($X_t$ or some other process, which will be obvious from the context) starting at $x$. Where no subscript is present, either conditioning is mentioned explicitly or the process is assumed to start from zero (the latter will only apply to the Brownian motion process).

    For the diffusion $X_t$ we will need a `reference' diffusion with certain characteristics. These are: being able to bound the probability that this reference diffusion crosses a linear boundary when its final value is known; the process has unit diffusion coefficient; and the process has the same diffusion interval as $X_t$. If $X_t$ has a diffusion interval of $\mathbb{R}$, we use the Brownian motion as the reference process. For diffusions on $\mathbb{R}_+$ we use the Bessel process of an arbitrary integer-valued dimension $d \geq 3$. Note that a further reason for using Bessel processes is that they are of independent interest, having applications in many areas.

    Recall the definition of the Bessel process $R_t$ of dimension $d = 1, 2, \ldots$. This process gives the Euclidean distance from the origin of the $d$-dimensional Brownian motion, that is,
        \[  R_t = \sqrt{\bigl(W_t^{(1)}\bigr)^2 + \cdots + \bigl(W_t^{(d)}\bigr)^2},    \]
    where the $W_t^{(i)}$ are independent standard Brownian motions, $i = 1, \ldots, d$. As is well known (see e.g. \cite{Revuz_etal_xx99}, p.445), $R_t$ satisfies the SDE
        \begin{align}
        \label{eq:Rt_diff}
            dR_t = \frac{d-1}{2} \frac{1}{R_t}dt + dW_t.
        \end{align}
    Note that for non-integer values of $d$ the Bessel process of order $d$ is defined using the above SDE. The process has a transition density function $p_R(t, y, z)$ given by
        \begin{align}
            \label{eq:bes_tpdf}
        p_R(t,y,z) = z \left(\frac{z}{y}\right)^{\eta} t^{-1} e^{-(y^2 + z^2)/2t} {\cal I}_{\eta} \left(\frac{yz}{t} \right),
        \end{align}
    where $\eta = d/2 -1$ and ${\cal I}_{\eta}(z)$ is the modified Bessel function of the first kind. For further information, see \cite{Revuz_etal_xx99}.

    Finally, by $\Phi$ we denote the standard normal distribution function.

\section{The Existence and Bounds of the First Passage Time Densities}
\label{sec:fpt_density}

    This section proves the existence of, and gives an upper bound for, the density of the first passage time
        \[\tau := \inf\{t>0 : X_t \geq g(t)\}. \]
    The diffusion $X_t$, with $X_0 = x$, is subject to some mild conditions and the upper boundary $g(t)$ satisfies Lipschitz condition. For diffusions on $\mathbb{R}$ the result is then extended to lower boundaries, and further results give lower bounds for the first hitting time density for both upper and lower boundaries.

    Let $\ov{g}(t)$ denote the maximum of the boundary function up to time $t$:
        \[  \ov{g}(t) := \max_{0 \leq s \leq t} g(s).   \]
    We assume $X_t$ has a transition density $p(t,y,z)$ and define functions $q(t,y,z)$, $G(y)$ and $M(t)$ as follows, according to the diffusion interval of $X_t$:
        \begin{enumerate}
            \item[][A] If the diffusion interval of $X_t$ is $\mathbb{R}$, then we denote by $q(t, y, z)$ the transition density of the Brownian motion, and define, for some fixed $y_0 \in \mathbb{R}$,
                    \begin{equation}
                        \begin{array}{rl}
                            G(y) &:= \displaystyle\int_{y_0}^{y} \mu(z) dz,\\
                            M(t) &:= \displaystyle\inf_{-\infty < y \leq \ov{g}(t)} \left(\mu'(y) + \mu^2(y)\right).
                        \end{array}
                    \label{eq:M_R}
                    \end{equation}
            \item[][B] If the diffusion interval of $X_t$ is $\mathbb{R}_+$, then we denote by $q(t, y, z)$ the transition density of a $d$-dimensional Bessel process for some fixed integer $d \geq 3$, and define, for some fixed $y_0\geq 0$,
                    \begin{equation}
                        \begin{array}{rl}
                            G(y) &:= \displaystyle\int_{y_0}^{y} \left(\mu(z) - \frac{d-1}{2z} \right) dz,\\
                            M(t) &:= \displaystyle\inf_{0 < y \leq \ov{g}(t)} \left( \mu'(y) - \frac{(d-1)(d-3)}{4y^2} + \mu^2(y)  \right).
                        \end{array}
                    \label{eq:M_Rplus}
                    \end{equation}
        \end{enumerate}
    To simplify the statement of our main theorem, we set
        \begin{equation}
        \label{eq:x_bar}
        \xb := \begin{cases} x & \mbox{in case [A]} \\
                                                        -x & \mbox{in case [B]}.
                         \end{cases}
        \end{equation}

    \begin{thm}
    \label{th:density_bound}
        Let $X_t$ be a diffusion process satisfying \eqref{eq:Xt_diff} with $\mu(y)$ differentiable inside the diffusion interval.  Let $g(t)$ be a function on $[0,1]$ such that $g(0)>x$ and, for some $K^{\pm} \in \mathbb{R}$,
            \begin{align}
            \label{eq:lip_assump}
                -\km h \leq g(t+h) - g(t) \leq \kp h, \;\;\; 0 \leq t < t+h \leq 1.
            \end{align}
        Then the first passage time $\tau$ of the process $X_t$ of the boundary $g(t)$, $t \in(0,1)$, has a density $p_{\tau}(t)$, satisfying
            \begin{align}
            \label{eq:density_bound}
                p_{\tau}(t) \leq B(g,t),
            \end{align}
        where $B(g,t)$ is given by
            \begin{align}
            \label{eq:Bt_def}
                B(g,t) := \frac{1}{t} (g(t) + \km t - \xb) q(t,x,g(t)) e^{G(g(t)) - G(x) -(t/2) M(t)}.
            \end{align}
    \end{thm}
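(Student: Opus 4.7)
The plan is to remove the drift $\mu$ of $X_t$ by a Girsanov change of measure, reducing the problem to bounding the first-passage density of the reference process (Brownian motion in case [A], Bessel in case [B]) to the same boundary $g$, and then to prove that reference-process bound by pathwise comparison with a suitably chosen linear boundary.

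Let $\Qr_x$ denote the measure under which $X_t$ satisfies the reference SDE. An It\^o expansion of $G(X_t)$ converts the usual Girsanov exponent into
\begin{equation*}
Z_t := \frac{d\Pr_x}{d\Qr_x}\Big|_{\mathcal{F}_t} = \exp\!\left\{G(X_t) - G(x) - \frac{1}{2}\int_0^t H(X_s)\,ds\right\},
\end{equation*}
with $H(y) = \mu'(y) + \mu^2(y)$ in case [A] and $H(y) = \mu'(y) + \mu^2(y) - (d-1)(d-3)/(4y^2)$ in case [B]; these are precisely the integrands whose infima over $\{y \leq \ov{g}(t)\}$ define $M(t)$. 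Since $\{\tau \in A\}$ is $\mathcal{F}_\tau$-measurable, optional stopping gives $\Pr_x(\tau \in A) = \Exp^\Qr_x[\ind_{\{\tau \in A\}} Z_\tau]$ for Borel $A \subseteq (0,1)$. On $\{\tau \leq t\}$ we have $X_s \leq g(s) \leq \ov{g}(t)$ for $s \leq \tau$, so $\int_0^\tau H(X_s)\,ds \geq \tau M(\tau)$; combined with $X_\tau = g(t)$ on $\{\tau \in dt\}$ this yields $Z_\tau \leq \exp\{G(g(t)) - G(x) - (t/2)M(t)\}$ and
\begin{equation*}
\Pr_x(\tau \in dt) \leq \exp\!\{G(g(t)) - G(x) - (t/2)M(t)\}\,\Qr_x(\tau \in dt).
\end{equation*}

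It remains to bound $\Qr_x(\tau \in dt) \leq \tfrac{1}{t}(g(t) + K^- t - \xb)q(t, x, g(t))\,dt$. Fix $t \in (0,1)$ and compare $g$ with the line $\ell_t(s) := g(t) + K^-(t - s)$; the Lipschitz hypothesis gives $g(s) \leq \ell_t(s)$ for $s \leq t$ and $g(s) \geq \ell_t(s)$ for $s \geq t$, with equality at $s = t$. A pathwise check then shows that on $\{\tau \in [t, t+h]\}$ the reference process stays below $\ell_t$ on $[0,t)$ (so $\sigma_{\ell_t} \geq t$) and reaches $X_\tau = g(\tau) \geq \ell_t(\tau)$ at time $\tau$ (so $\sigma_{\ell_t} \leq \tau \leq t+h$), giving
\begin{equation*}
\Qr_x(\tau \in [t, t+h]) \leq \Qr_x(\sigma_{\ell_t} \in [t, t+h]),
\end{equation*}
where $\sigma_{\ell_t}$ is the reference process's first-passage time to $\ell_t$. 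Dividing by $h$ and letting $h \downarrow 0$ bounds the upper Dini derivative of $t \mapsto \Qr_x(\tau \leq t)$ by the first-passage density of the reference process to the linear boundary $\ell_t$ at time $t$. In case [A] this is the Bachelier--L\'evy formula $\tfrac{\ell_t(0) - x}{t}q(t, x, \ell_t(t)) = \tfrac{g(t) + K^- t - x}{t}q(t, x, g(t))$; in case [B] the analogous bound $\tfrac{g(t) + K^- t + x}{t}q(t, x, g(t))$ for the Bessel process is obtained from \eqref{eq:bes_tpdf}, matching $\tfrac{g(t) + K^- t - \xb}{t}q(t, x, g(t))$ under the convention $\xb = -x$. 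A standard covering argument upgrades this upper-Dini-derivative bound to absolute continuity of the law of $\tau$ under $\Qr_x$ (thereby proving existence of the density) with the stated pointwise bound; combining with the previous display yields \eqref{eq:density_bound}.

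The principal obstacle is the linear-boundary first-passage density bound for the Bessel process: unlike the Brownian case, there is no clean closed form, and the bound has to be extracted directly from \eqref{eq:bes_tpdf}, essentially via a reflection-type argument based on the representation $R = |B|$ for a $d$-dimensional Brownian motion $B$; this is also where the sign-flip $x \mapsto -x$ encoded in the definition $\xb = -x$ of case [B] arises.
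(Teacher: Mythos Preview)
Your route differs from the paper's. The paper conditions on $X_t=z$, splits $\Pr_x(\tau\in(t,t+h)\,|\,X_t=z)$ into a bridge non-crossing factor and a crossing-after-$t$ factor (equation~\eqref{eq:break_tau_cross}), applies Girsanov to the \emph{pinned} process to reduce the first factor to a reference-process bridge (Lemmas~\ref{lem:bridge_prob}--\ref{lem:pinned_diffs}), and then bounds both factors separately by comparison with linear boundaries and constant-drift processes, handling an $I_1/I_2$ split in $z$. You instead apply Girsanov to the \emph{unconditioned} process, stop at $\tau$, and reduce at once to $\Qr_x(\tau\in dt)$, which you then control via the pathwise inclusion $\{\tau\in[t,t+h]\}\subseteq\{\sigma_{\ell_t}\in[t,t+h]\}$. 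In case~[A] this is genuinely cleaner and correct: the Bachelier--L\'evy density for $\sigma_{\ell_t}$ finishes the argument without the $h^{1/4}$ bookkeeping or any separate treatment of the ``after-$t$'' factor, and your covering argument for absolute continuity is valid because the bound holds at \emph{every} $t$ with a continuous right-hand side.

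In case~[B], however, your reduction only relocates the difficulty. You need the first-passage density of the Bessel process to the line $\ell_t$ at time $t$ to be bounded by $\tfrac{\ell_t(0)+x}{t}\,q(t,x,g(t))$, and there is no closed formula for this. The ``reflection-type argument based on $R=\|W\|$'' you invoke gives the inclusion $\{W^{(1)}\ge\ell_t\}\subseteq\{\|W\|\ge\ell_t\}$, which bounds the Bessel crossing probability from \emph{below}---the wrong direction for what you need. What actually works is the paper's Lemma~\ref{lem:BesBridge}: one conditions on $R_t=z$, uses rotational symmetry of $d$-dimensional Brownian motion to bound the pinned-Bessel non-crossing probability by a one-dimensional Brownian-bridge probability started at $-x$ (this is precisely where the sign flip $\xb=-x$ enters), and then handles the crossing-after-$t$ factor by comparison with constant-drift processes. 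So in case~[B] your Girsanov step is a legitimate first move, but the Bessel estimate you defer is not a black box readable from~\eqref{eq:bes_tpdf}; filling it in requires essentially the same bridge decomposition the paper carries out for the original diffusion.
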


    \begin{rem}
        \begin{enumerate}
            \item[i)] For diffusions on $\mathbb{R}_+$, the choice of the integer $d$ is arbitrary, subject to $d  \geq 3$, and can be used to optimise the bound.
            \item[ii)] The bound is sharp: for constant $\mu$ and a linear boundary, equality holds in \eqref{eq:density_bound}.
            \item[iii)] In terms of the original process $U_t$ in \eqref{eq:original_diff}, the condition of differentiability of $\mu(y)$ requires $\nu(z)$ to be differentiable and $\gs(z)$ to be twice differentiable.
        \end{enumerate}
    \end{rem}

    \begin{proof}
    For a fixed $t$ and small $h>0$, we can write, conditioning on the value of $X_t$,
        \begin{align}
        \label{eq:split_tau}
            \Pr_x (\tau \in (t, t+h)) &= \int_{\ell}^{g(t)} \Pr_x (\tau \in (t, t+h) | X_t = z) \Pr_x (X_t \in dz)\notag\\
                    &= \int_{\ell}^{g(t)-h^{1/4}} + \int_{g(t)-h^{1/4}}^{g(t)} =: I_1 + I_2.
        \end{align}
    Clearly, by the Markov property
        \begin{align}
        \label{eq:break_tau_cross}
            \Pr_x (\tau \in (t, t+h) | X_t = z) = & \Pr_x \left( \sup_{0\leq s \leq t} \left(X_s - g(s)\right) < 0 \Big| X_t = z \right)\notag\\
            & \times \Pr_x \left( \sup_{t\leq s \leq t+h} \left(X_s - g(s)\right) \geq 0 \Big| X_t = z \right).
        \end{align}
    We proceed by bounding the first factor on the right hand side for the region $g(t) - h^{1/4} \leq z \leq g(t)$ (it is not required for the remaining interval, as will be seen later). The process in question is now a `diffusion bridge', or `pinned diffusion': we are interested in the behavior of the original process $X_s$ over $0 \leq s \leq t$ conditional on its value at time $t$. We will use the following lemma.

    \begin{lem}
    \label{lem:bridge_prob}
    Let $\Ph$ denote the law of $X_s$ starting at $X_0 = x$ and pinned by $X_t = z$. If $\cal{F}_s := \gs\left(X_u: u \leq s\right)$, then for any $A \in \cal{F}_t$
            \[ \Ph(A) = \frac{q(t, x, z)}{p(t, x, z)} e^{G(z) - G(x)}   \Eqh\left[e^{-(1/2) N(t)}\ind_{\{X \in A\}}\right], \]
        where $\ind_B$ is the indicator of the event $B$ and $\Eqh$ denotes expectation with respect to the probability $\Qh$ defined as follows:
        \begin{enumerate}
            \item[{\em (i)}] if the diffusion interval of $X_s$ is $\mathbb{R}$, then $\Qh$ denotes the law of the Brownian motion $W_s$ on $[0, t]$, starting at $W_0 = x$ and conditional on $W_t = z$, while
                \[  N(t) := \int_0^t \left(\mu'(X_u) + \mu^2(X_u)\right) du;    \]
            \item[{\em (ii)}] if the diffusion interval of $X_s$ is $[0, \infty)$, then $\Qh$ denotes the law of the Bessel processes $R_s$ of dimension $d \geq 3$ on $[0, t]$, starting at $R_0 = x$ and conditional on $R_t = z$, while
                \[  N(t) := \int_0^t \left( \mu'(X_u) - \frac{(d-1)(d-3)}{4X_u^2} + \mu^2(X_u)  \right) du. \]
        \end{enumerate}
    \end{lem}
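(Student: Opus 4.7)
My plan is to prove the lemma as a standard consequence of Girsanov's theorem combined with It\^o's formula and conditioning on the endpoint of the path. Throughout I fix the time horizon $t$ and view both $X$ and the reference process (Brownian motion in case (i); a Bessel process of dimension $d\ge 3$ in case (ii)) as carried on the same path space, so that Girsanov produces an honest Radon--Nikodym density between their laws on $\mathcal{F}_t$.

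First, treating the unconditional laws, I would apply Girsanov to the drift difference $\theta(y):=\mu(y)$ in case (i) and $\theta(y):=\mu(y)-(d-1)/(2y)$ in case (ii). A localisation argument (stopping times sending the process outside a large compact interval in case (i), or down to a small $\ep>0$ away from the boundary of the diffusion interval in case (ii)) absorbs the mild differentiability hypothesis on $\mu$ and ensures a Novikov-type condition is met along the stopped processes. This yields
\[
\frac{d\Pr_x}{d\Qh}\bigg|_{\mathcal{F}_t} = \exp\!\left\{\int_0^t \theta(Y_u)\,dB_u - \tfrac12\int_0^t \theta^2(Y_u)\,du\right\},
\]
where $Y$ is the reference process under $\Qh$ and $B$ its driving Brownian motion. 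Since $G$ in \eqref{eq:M_R}, \eqref{eq:M_Rplus} is precisely an antiderivative of $\theta$, It\^o's formula applied to $G(Y_u)$ expresses $\int_0^t\theta(Y_u)\,dB_u$ as $G(Y_t)-G(x)$ minus bounded-variation corrections coming from the $dY$-to-$dB$ conversion and from the second-order term $\tfrac12 G''$. Combining these with $-\tfrac12\theta^2$ in the exponent rearranges the Radon--Nikodym density into the form $\exp\{G(Y_t)-G(x)-\tfrac12 N(t)\}$ with $N(t)$ as in parts (i) and (ii).

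In case (i) this algebra is immediate, since there is no $dY$-to-$dB$ correction and $G''=\mu'$, so the exponent reduces directly to $G(Y_t)-G(x)-\tfrac12\int_0^t(\mu'+\mu^2)(Y_u)\,du$. In case (ii) the calculation is the one genuinely delicate step of the argument: four sources of $1/y^2$-terms---from $\theta^2$, from $\tfrac12\tilde\mu'$, from the $\tilde\mu\cdot(d-1)/(2y)$ correction, and from the cross term $\mu(y)(d-1)/(2y)$---must combine, and one has to check that the $\mu/y$ contributions cancel exactly while the $1/y^2$ coefficients collapse to $-\tfrac14(d-1)(d-3)$. A direct computation confirms this, so the exponent does reduce to the claimed $G(Y_t)-G(x)-\tfrac12 N(t)$; this is the only real technical obstacle.

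The final step is to condition on $Y_t=z$. By disintegration of both $\Pr_x$ and $\Qh$ with respect to the position at time $t$, for any $A\in\mathcal{F}_t$ one has
\[
p(t,x,z)\,\Ph(A) = q(t,x,z)\,\Eqh\!\left[\exp\{G(Y_t)-G(x)-\tfrac12 N(t)\}\ind_{\{Y\in A\}}\right].
\]
Since $Y_t=z$ almost surely under the pinned reference measure, the factor $e^{G(Y_t)}$ equals the constant $e^{G(z)}$ and can be pulled outside the expectation, yielding the stated identity. The measure $\Qh$ of the lemma is thus identified as the Brownian bridge (case (i)) or the Bessel bridge of dimension $d$ (case (ii)) from $x$ to $z$ over $[0,t]$; the localisation introduced above is removed by monotone convergence as the stopping times tend to $t$, since the integrands are nonnegative.
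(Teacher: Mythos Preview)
Your proposal is correct and follows essentially the same route as the paper: Girsanov with respect to the reference process (Brownian motion or Bessel), It\^o's formula applied to $G$ to convert the stochastic integral into $G(Y_t)-G(x)$ plus the finite-variation correction that produces $N(t)$, and then passage to the pinned laws via disintegration (the paper invokes Lemma~3.1 of Baldi--Caramellino for this last step, and cites that reference outright for case~(i)). The only difference is that you include an explicit localisation/Novikov discussion which the paper leaves implicit.
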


    \begin{proof}
        In case [A], the result was obtained in \cite{Baldi_etal_0802}, so we only need to consider case [B]. For the latter, we will use a modification of the argument leading to (14) in \cite{Baldi_etal_0802}.

        Let $\cal{C} = \cal{C}([0, t], \mathbb{R}^+)$ be the space of all continuous positive functions on $[0, t]$, which we will interpret as a sample space for the processes in question, and on $\cal{C}$ define $V_s(\go) = \go_s$, $\go \in {\cal C}$, ${\cal{F}}_s = \gs(V_u, u \leq s)$. On the space $(\cal{C}, \cal{F}_t)$, in addition to the laws $\Pr_x$ and $\Ph$, define
            \begin{align*}
                \Qr_x & := \mbox{ the law of }R_s,\\
                \Qh & := \mbox{ the law of }R_s \mbox{, pinned by }R_t = z,
            \end{align*}
        where $R_s$ is the $d$-dimensional Bessel process starting at $R_0 = x$ (see \eqref{eq:Rt_diff}). Hence under $\Pr_x$
            \[  dV_s = \mu(V_s) ds + dW_s,  \]
        while under $\Qr_x$
            \[  dV_s = \frac{d-1}{2V_s} ds + d\widetilde{W}_s,  \]
        for a $\mathbb{Q}$ Brownian motion $\widetilde{W}_s$. Let $\Eh$ denote expectation with respect to $\Qr_x$. If
            \begin{align*}
                \gz_s &:= \exp\left\{ \int_0^s \left( \mu(V_u) - \frac{d-1}{2V_u} \right) dW_u - \frac{1}{2} \int_0^s \left( \mu(V_u) - \frac{d-1}{2V_u} \right)^2 du \right\}\\
                    &=\exp\Bigg\{ \int_0^s \left( \mu(V_u) - \frac{d-1}{2V_u} \right)\left(dV_u - \frac{d-1}{2V_u}du \right) \\
                    &\hspace{8cm} - \frac{1}{2} \int_0^s \left( \mu(V_u) - \frac{d-1}{2V_u} \right)^2 du \Bigg\},
            \end{align*}
        then by Girsanov's theorem, for all $A \in \cal{F}_t$ we have
            \[ \Pr_x(A) = \Eh \left[ \gz_t \ind_{\{V \in A\}} \right].  \]
        Under $\Qr_x$ we have by \ito's formula, using \eqref{eq:M_Rplus}, that
            \[  \int_{0}^{t} \left(\mu(V_u) - \frac{d-1}{2V_u} \right)dV_u = G(V_t) - G(V_0) - \frac{1}{2} \int_{0}^{t} \left(\mu'(V_u) + \frac{d-1}{2V_u^2} \right) du,    \]
        so
            \begin{align}
            \label{eq:gir_density}
                \gz_t &= \exp \left\{ G(V_t) - G(V_0) - \frac{1}{2} \int_{0}^{t} \left( \mu'(V_u) + \frac{d-1}{2V_u^2} + \mu^2(V_u) - \left(\frac{d-1}{2V_u}  \right)^2 \right) du \right\}\notag\\
                          &= \exp \left\{ G(V_t) - G(V_0) - \frac{1}{2} \int_{0}^{t} \left( \mu'(V_u) - \frac{(d-1)(d-3)}{4V_u^2} + \mu^2(V_u)  \right) du \right\}.
            \end{align}
        Using Lemma~3.1 of \cite{Baldi_etal_0802}, we have
            \begin{align}
            \label{eq:prob_rel}
                \Ph(A) = \frac{q(t,x,z)}{p(t,x,z)} \Eqh [ \gz_t \ind_{\{V \in A\}} ].
            \end{align}
        Inserting \eqref{eq:gir_density} into \eqref{eq:prob_rel} gives
            \begin{align*}
                \Ph(A) &= \frac{q(t, x, z)}{p(t, x, z)} \Eqh \Bigg[ \exp \Bigg\{G(V_t) - G(V_0) \\
                 & \hspace{2.5cm}- \frac{1}{2} \int_{0}^{t} \left( \mu'(V_u) - \frac{(d-1)(d-3)}{4V_u^2}  + \mu^2(V_u)  \right) du \Bigg\} \ind_{\{V \in A\}} \Bigg],
            \end{align*}
        completing the proof of the lemma.
    \end{proof}

    Next we apply Lemma \ref{lem:bridge_prob} to our boundary crossing problem.

    \begin{lem}
    \label{lem:pinned_diffs}
        \begin{enumerate}
            \item[{\em (i)}] In case [A],
                \begin{align*}
                    \Pr_x \bigg( \sup_{0\leq s \leq t} (X_s &- g(s)) < 0 \Big| X_t = z \bigg)\\
                         &\leq \frac{q(t, x, z)}{p(t, x, z)} e^{G(z) - G(x) -(t/2) M(t)}    \Pr_x \left(\sup_{0 \leq s \leq t} \left(W_s - g(s)\right) < 0 \Big| W_t = z \right);
                \end{align*}
            \item[{\em (ii)}] in case [B],
                \begin{align*}
                    \Pr_x \bigg( \sup_{0\leq s \leq t} (X_s &- g(s)) < 0 \Big| X_t = z \bigg)\\
                        &\leq \frac{q(t, x, z)}{p(t, x, z)} e^{G(z) - G(x) -(t/2) M(t)} \Pr_x \left(\sup_{0 \leq s \leq t} \left(R_s - g(s)\right) < 0 \Big| R_t = z \right),
                \end{align*}
        \end{enumerate}
        where $M(t)$ was defined in \eqref{eq:M_R}, \eqref{eq:M_Rplus} according to the diffusion interval of $X_s$.
    \end{lem}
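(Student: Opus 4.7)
The plan is to apply Lemma~\ref{lem:bridge_prob} with the specific event $A = \{\sup_{0\le s\le t}(V_s - g(s)) < 0\}$ on the canonical path space (with coordinate process $V$), and then to control the random weight $e^{-(1/2)N(t)}$ pointwise on $A$ by a single deterministic constant.

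First I would plug this $A$ into the identity of Lemma~\ref{lem:bridge_prob}, obtaining
\[\Ph(A) = \frac{q(t,x,z)}{p(t,x,z)}\, e^{G(z)-G(x)}\, \Eqh\!\left[e^{-(1/2)N(t)}\, \ind_{\{V\in A\}}\right].\]
The key observation is that on $\{V\in A\}$ every $V_u$ with $u\in[0,t]$ satisfies $V_u < g(u) \le \ov{g}(t)$, so at each $u$ the integrand of $N(t)$ lies inside the region over which the infimum defining $M(t)$ is taken. This gives $N(t) \ge t M(t)$ on $\{V\in A\}$, hence $e^{-(1/2)N(t)} \le e^{-(t/2)M(t)}$ there. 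Pulling this deterministic factor out of the expectation leaves $\Eqh[\ind_{\{V\in A\}}] = \Qh(A)$, which by the construction of $\Qh$ in Lemma~\ref{lem:bridge_prob} is precisely the probability that pinned Brownian motion (case [A]) or pinned Bessel motion of dimension $d$ (case [B]) stays strictly below $g$ on $[0,t]$ --- exactly the conditional probability on the right-hand side of the claim.

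The one step that needs genuine care is the pointwise estimate $N(t) \ge t M(t)$ in case [B], where the integrand involves $1/V_u^2$ and the infimum in \eqref{eq:M_Rplus} is taken only over $(0,\ov{g}(t)]$. The restriction $d\ge 3$ is what rescues the argument: under $\Qh$ the pinned Bessel process almost surely avoids the origin on $[0,t]$, so the integrand is finite a.s.\ and the infimum over $(0,\ov{g}(t)]$ is an honest lower bound on $\{V\in A\}$. Case [A] is then an immediate parallel, using that $V_u \in (-\infty,\ov{g}(t)]$ on $\{V\in A\}$ so that $M(t)$ from \eqref{eq:M_R} serves directly as the pointwise lower bound for the integrand of $N(t)$.
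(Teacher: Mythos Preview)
Your proposal is correct and follows exactly the route the paper takes: apply Lemma~\ref{lem:bridge_prob} with $A=\{\sup_{0\le s\le t}(X_s-g(s))<0\}$, then use that on $A$ every $X_u$ lies in $(\ell,\ov g(t)]$ so that $M(t)$ bounds the integrand of $N(t)$ from below, giving $e^{-(1/2)N(t)}\le e^{-(t/2)M(t)}$ and leaving $\Qh(A)$. Your additional remark about $d\ge3$ ensuring the pinned Bessel path avoids the origin is a helpful clarification that the paper leaves implicit.
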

    The assertions of the lemma immediately follow from that of Lemma~\ref{lem:bridge_prob} with $A = \left\{ \sup_{0\leq s \leq t} \left(X_s - g(s)\right) < 0 \right\}$. Note that $M(t)$ gives a lower bound on the integrand in $N(t)$ for all paths in $A$.

    In order to apply the above lemma, we introduce the linear boundary
        \begin{align}
        \label{eq:lin_bdy}
            g_t(s) := g(t) + \km(t-s),  \;\;\; 0 \leq s \leq 1.
        \end{align}
    Due to assumption~\eqref{eq:lip_assump}, we have $g_t(s) \geq g(s)$ for $0 \leq s \leq t$, and hence
        \begin{align*}
            \Pr_x \left(\sup_{0 \leq s \leq t} \left(Y_s - g(s)\right) < 0 \Big| Y_t = z \right) & \leq \Pr_x \left(\sup_{0 \leq s \leq t} \left(Y_s - g_t(s)\right) < 0 \Big| Y_t = z \right)
        \end{align*}
    for any process $Y_s$. We now need to bound the boundary crossing probabilities for Brownian bridges and pinned Bessel processes for linear boundaries. In the case of Brownian bridges, an explicit formula is well-known (see e.g. \cite{Borodin_0505}, pp. 64-67):
        \begin{align}
        \label{eq:BBridge}
            \Pr_x \left(\sup_{0 \leq s \leq t} \left(W_s - g_t(s)\right) < 0 \Big| W_t = z \right) = 1 - e^{-2 b_1(x) b_2(z)},
        \end{align}
    where $b_1(x) = (g_t(0) - x)$ and $b_2(z) = t^{-1}\left(g_t(0) - K^-t - z \right)$. The pinned Bessel process satisfies the following inequality:

    \begin{lem}
    \label{lem:BesBridge}
        For a $d$-dimensional Bessel process $R_s$, $d=1,2,\ldots$, and $x \in (0, g(0))$, $z \in (0, g(t))$,
            \begin{align*}
                \Pr_x \left(\sup_{0 \leq s \leq t} \left(R_s - g_t(s)\right) < 0 \Big| R_t = z \right) \leq 1 - e^{-2 b_1(-x) b_2(z)},
            \end{align*}
        with $b_1(x)$ and $b_2(z)$ defined above.
    \end{lem}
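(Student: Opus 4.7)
The plan is to condition on the vector-valued endpoint of the underlying $d$-dimensional Brownian motion (not merely on $R_t=z$) and then project onto the direction of that endpoint, reducing the problem to the Brownian bridge formula \eqref{eq:BBridge}. Realize $R_s = \|\mathbf{W}_s\|$ for a $d$-dimensional standard Brownian motion $\mathbf{W}_s$ with $\mathbf{W}_0 = x\mathbf{e}_1$. Any point on the sphere $\{R_t = z\}$ may be written as $\mathbf{W}_t = z\hat{\mathbf{w}}$ for some unit vector $\hat{\mathbf{w}} \in \mathbb{R}^d$, so by the tower property it suffices to bound $\Pr_x(\sup_{s \le t}(R_s - g_t(s)) < 0 \,|\, \mathbf{W}_t = z\hat{\mathbf{w}})$ by $1 - e^{-2b_1(-x)b_2(z)}$ uniformly in $\hat{\mathbf{w}}$, and then average over $\hat{\mathbf{w}}$.

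Introduce the scalar projection $U_s := \hat{\mathbf{w}} \cdot \mathbf{W}_s$. By Cauchy--Schwarz, $U_s \le \|\mathbf{W}_s\| = R_s$ for every $s$, so $\{R_s < g_t(s)\,\forall s\} \subseteq \{U_s < g_t(s)\,\forall s\}$. Conditionally on $\mathbf{W}_t = z\hat{\mathbf{w}}$, each coordinate $W_s^{(i)}$ is an independent Brownian bridge from $x\delta_{i,1}$ to $z\hat{w}_i$; a short computation of mean and covariance, using the normalisation $\sum_i \hat{w}_i^2 = 1$, shows that $(U_s)_{0 \le s \le t}$ is then a one-dimensional Brownian bridge from $x\hat{w}_1$ to $z$. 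Applying \eqref{eq:BBridge} to this bridge and the linear boundary $g_t$ gives
$$\Pr_x\bigl(U_s < g_t(s)\,\forall s \le t \,|\, \mathbf{W}_t = z\hat{\mathbf{w}}\bigr) = 1 - \exp\bigl\{-2(g_t(0) - x\hat{w}_1)(g(t) - z)/t\bigr\}.$$

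Since $\hat{w}_1 \ge -1$, we have $g_t(0) - x\hat{w}_1 \le g_t(0) + x = b_1(-x)$, so the right-hand side above is at most $1 - e^{-2 b_1(-x) b_2(z)}$. As this pointwise bound does not depend on $\hat{\mathbf{w}}$, averaging over the conditional law of $\hat{\mathbf{w}}$ given $R_t = z$ yields the lemma. The one step requiring care is the identification of $U_s$ as a genuine one-dimensional Brownian bridge conditional on the vector-valued endpoint $\mathbf{W}_t$: this is where the independence of the coordinates of $\mathbf{W}$ and the identity $\|\hat{\mathbf{w}}\| = 1$ play their role, and without them the Gaussian process $U_s$ would not have the correct bridge covariance $\min(s,r) - sr/t$.
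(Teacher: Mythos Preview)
Your proof is correct and follows essentially the same approach as the paper: both arguments represent $R_s$ as the norm of a $d$-dimensional Brownian motion, condition on the vector-valued endpoint, and reduce to a one-dimensional Brownian bridge by projecting onto the endpoint direction (the paper phrases this as a rotation sending $\mathbf{z}$ to $(z,0,\ldots,0)$ followed by projection onto the first coordinate, which is equivalent to your direct projection onto $\hat{\mathbf{w}}$). Your observation that the projected starting point equals $x\hat{w}_1\in[-x,x]$ is exactly the paper's $\sup_{|y|\le x}$ step, and both conclude by applying \eqref{eq:BBridge} at the worst case $-x$.
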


    \begin{rem}
        If the conditioned diffusion (or `Bessel bridge') $r_s$, $0 \leq s \leq t$, in Lemma~\ref{lem:BesBridge} (whose distribution coincides with the conditional law of $R_s$, $0 \leq s \leq t$, given $R_t = z$) followed, by analogy with the case of the Brownian bridge, a SDE of the form
            \[  dr_s = \left(\frac{z - r_s}{t-s} + \frac{d-1}{2r_s} \right) ds + dW_s   \]
        (as stated in \cite{Alili_etal_xx05}), we could obtain a sharper bound than the one from Lemma~\ref{lem:BesBridge}. Unfortunately, it appears that the Bessel bridge process actually satisfies the following SDE (which can easily be derived from Theorem~5.8 and (4.d1)--(4.d3) in \cite{Pitman_etal_x180}):
            \begin{align}
            \label{eq:BeB_sde}
                dr_s = \frac{1}{t-s} \left( z \frac{{\cal I}_{\eta+1} \left(\frac{z r_s}{t-s}\right)}{{\cal I}_{\eta} \left(\frac{z r_s}{t-s}\right)} - r_s \right) ds + \frac{d-1}{2r_s} ds + dW_s
            \end{align}
        (cf. \eqref{eq:bes_tpdf}). Alternatively, this SDE could also be derived using an explicit formula for the transition density of the Bessel bridge:
            \[  \Pr_x \left(R_s \in dy \big| R_u = v, R_t = z \right) =  y \frac{t-u}{(s-u)(t-s)} \frac{e^{-\frac{v^2 + y^2}{2(s-u)}} e^{-\frac{z^2 + y^2}{2(t-s)}}}{ e^{-\frac{v^2 + z^2}{2(t-u)}}}\frac{{\cal I}_{\eta} \left(\frac{v y}{s-u}\right) {\cal I}_{\eta} \left(\frac{z y}{t-s}\right)}{{\cal I}_{\eta} \left(\frac{v z}{t-u}\right)} dy,  \]
    $u < s < t$, which is easily obtainable from (1.0.6) of \cite{Borodin_etal_xx02}, p.373. With \eqref{eq:BeB_sde} we are unable to improve the above result.
    \end{rem}

    \begin{proof}
    Let $\bmv_s$ denote the $d$-dimensional Brownian motion $\bigl(W_s^{(1)}, W_s^{(2)}, \ldots, W_s^{(d)}\bigr)$ with initial value $\bmv_0 = \mbf{x} = (x, 0, \ldots, 0)$, so we can assume without loss of generality that $R_s = \norm{\bmv_s}$, $s \ge 0$. Then
        \begin{align*}
            \Pr_x \bigg(\sup_{0 \leq s \leq t} &\left(R_s - g_t(s)\right) < 0 \Big| R_t = z \bigg) \\
            &= \Pr_{\mbff{x}} \left(\sup_{0 \leq s \leq t} \left(\norm{\bmv_s} - g_t(s)\right) < 0 \Big| \norm{\bmv_t} = z \right)\\
            &= \int_{\mbff{z}:\, \norm{\mbff{z}} = z} \Pr_{\mbff{x}} \left(\sup_{0 \leq s \leq t} \left(\norm{\bmv_s} - g_t(s)\right) < 0 \Big| \bmv_t = \mbf{z} \right)\\
            & \qquad \times \Pr_{\mbff{x}} \left( \bmv_t \in d\mbf{z} \Big| \norm{\bmv_t}=z \right)\\
            &\leq \sup_{\mbff{z},\mbff{y}:\, \norm{\mbff{z}} = z, \norm{\mbff{y}}=x} \Pr_{\mbff{y}} \left(\sup_{0 \leq s \leq t} \left(\norm{\bmv_s} - g_t(s)\right) < 0 \Big| \bmv_t = \mbf{z} \right).
        \end{align*}
    Due to the rotational symmetry of the $d$-dimensional Brownian motion, we may rotate our coordinates so that $\mbf{z} = \left(z, 0, \ldots, 0\right)$. This gives
        \begin{align*}
            \Pr_x \bigg(\sup_{0 \leq s \leq t} &\left(R_s - g_t(s)\right) < 0 \Big| R_t = z \bigg) \\
            &\leq \sup_{\mbff{y}:\, \norm{\mbff{y}}=x} \Pr_{\mbff{y}} \left(\sup_{0 \leq s \leq t} \left(\norm{\bmv_s} - g_t(s)\right) < 0 \Big| \bmv_t = (z, 0, \ldots, 0) \right)\\
            &\leq \sup_{y:\, |y| \leq x} \Pr_{y} \left(\sup_{0 \leq s \leq t} \left(W_s^{(1)} - g_t(s)\right) < 0 \Big| W_t^{(1)} = z \right)\\
            &= \Pr_{-x} \left(\sup_{0 \leq s \leq t} \left(W_s^{(1)} - g_t(s)\right) < 0 \Big| W_t^{(1)} = z \right).
        \end{align*}
    Applying \eqref{eq:BBridge} concludes the proof.
    \end{proof}

    With $\xb$, $M(t)$ and $G$ defined in \eqref{eq:M_R}--\eqref{eq:x_bar} and for $g(t) - h^{1/4} \leq z \leq g(t)$ we thus have
        \begin{align}
        \label{eq:final_before}
            \Pr_x \bigg( \sup_{0\leq s \leq t} \left(X_s - g(s)\right) &< 0 \Big| X_t = z \bigg) \leq \frac{q(t, x, z)}{p(t, x, z)} e^{G(z) - G(x) -(t/2) M(t)} (1 - e^{-2 b_1(\xb) b_2(z)})\notag\\
            &\leq  \sup_{g(t) - h^{1/4} \leq w \leq g(t)} q(t,x,w) e^{G(w) - G(x) -(t/2) M(t)} \frac{2b_1(\xb) b_2(z)}{p(t,x,z)}\notag\\
            &= 2 A_h(g,t) \frac{g(t) - z}{p(t,x,z)},
        \end{align}
    where
        \[  A_h(g,t) := \sup_{g(t) - h^{1/4} \leq w \leq g(t)} q(t,x,w) e^{G(w) - G(x) -(t/2) M(t)} \frac{1}{t} (g_t(0) - \xb). \]
    Now return to \eqref{eq:break_tau_cross} and consider the second factor, i.e. the probability that the process $X_s$ crosses the boundary $g(s)$ during $(t, t+h)$ given $X_t = z$. Again we bound this probability by the corresponding linear boundary crossing probability, with the linear boundary defined in \eqref{eq:lin_bdy}. Due to assumption \eqref{eq:lip_assump}, we have $g_t(s) \leq g(s)$ for $t \leq s \leq 1$, and hence
        \[  \Pr_x \left( \sup_{t\leq s \leq t+h} \left(X_s - g(s)\right) \geq 0 \Big| X_t = z \right) \leq \Pr_x \left( \sup_{t\leq s \leq t+h} \left(X_s - g_t(s)\right) \geq 0 \Big| X_t = z \right). \]
    For $z \in (\ell, g(t) - h^{1/4}]$, if the process is to cross the line $g_t(s)$, it must first hit the level $\gm := g(t) - h^{1/4}$ some time during $[t, t+h)$ (we assume without loss of generality that $\km < h^{-3/4}$). Thus for these $z$
        \begin{align*}
            \Pr \left( \sup_{t \leq s \leq t+h} \left(X_s - g_t(s)\right) \geq 0 \Big| X_t = z \right) &\leq \sup_{t \leq t' < t+h} \Pr \left( \sup_{t' \leq s \leq t+h} \left(X_s - g_t(s)\right) \geq 0 \Big| X_{t'} = \gm \right)\\
            &\leq \Pr \left( \sup_{t \leq s \leq t+h} X_s - g_{t,h} \geq 0 \Big| X_{t} = \gm \right)\\
            &= \Pr_{\gm} \left( \sup_{0 \leq s \leq h} X_s \geq g_{t,h} \right),
        \end{align*}
    where $g_{t,h} := g_t(t+h) \equiv g(t) - \km h$. Now let $\tau_h = \tau_h(X) := \inf\{s>0 : X_s \geq g_{t,h}\}$. Then for an arbitrary $\ga \in (\ell, \gm)$ we have
        \begin{align}
        \label{eq:x_notx}
            \Pr_{\gm} \left( \sup_{0 \leq s \leq h} X_s \geq g_{t,h} \right) = \Pr_{\gm} \left( \tau_h \leq h, \inf_{0 \leq s \leq \tau_h} X_s > \ga  \right) + \Pr_{\gm} \left( \tau_h \leq h, \inf_{0 \leq s \leq \tau_h} X_s \leq \ga \right).
        \end{align}

    We bound the last probabilities by considering the associated probabilities for processes which dominate, or are dominated by (as appropriate) $X_s$ almost surely. For the first term on the right-hand side, consider a process $Z_s$ such that $Z_s \geq X_s$ a.s. prior to the time $\tau'$ when $X_s$ leaves the interval $(\ga, g(t))$. Namely, we put
        \[ dZ_s = \mh ds + dW_{s},\mbox{ } s \geq 0, \mbox{ with } \mh := \sup_{\ga \leq y \leq g(t)} \mu(y) < \infty   \]
    due to the continuity of $\mu$ inside the diffusion interval. Then clearly
        \begin{align*}
            \Pr_{\gm} \bigg( \tau_h(X) \leq h, &\inf_{0\leq s\leq \tau_h(X)} X_s > \ga \bigg)\\
                &\leq \Pr_{\gm} \left( \tau_h(Z) \leq h, \inf_{0\leq s\leq \tau_h(X)} Z_s > \ga \right) \leq \Pr_{\gm} \left( \tau_h(Z) \leq h \right)\\
                &= \Pr\left(\sup_{0\leq s\leq h}\left(W_s + \mh s\right) \geq g_{t,h} - \gm \right)\\
                &= \Phi\left( \mh h^{1/2} - (g_{t,h} - \gm) h^{-1/2} \right) + e^{2\mh(g_{t,h} - \gm)} \Phi \left( -\mh h^{1/2} - (g_{t,h} - \gm) h^{-1/2} \right)\\
                &= 1 - \Phi\left(\gb_{-}(\gm)\right) + e^{2\mh(g_{t,h} - \gm)} \bigl[1 - \Phi\left(\gb_{+}(\gm)\right) \bigr],
        \end{align*}
    where the second last equality is a well-known relation for the Brownian motion (see for example (1.1.4) on p.250 in \cite{Borodin_etal_xx02}) and $\gb_{\pm}(z) := (g_{t,h} - z)h^{-1/2} \pm \mh h^{1/2}$. For the second term on the right-hand side in \eqref{eq:x_notx}, we choose a process $Y_s$ such that $Y_s \leq X_s$ a.s. for $s \leq \tau'$. Let
        \begin{align*}
            dY_s = \ms ds + dW_{s}, \mbox{ } s>0, \mbox{ with } \ms := \inf_{\ga \leq y \leq g(t)} \mu(y).
        \end{align*}
    Then clearly
        \begin{align*}
            \Pr_{\gm} \bigg( \tau_h(X) \leq h, &\inf_{0 \leq s\leq \tau_h(X) } X_s \leq \ga \bigg)\\
                &\leq \Pr_{\gm} \left( \tau_h(X) \leq h, \inf_{0 \leq s\leq \tau_h(X) } Y_s \leq \ga \right) \leq \Pr_{\gm} \left( \inf_{0\leq s\leq h} Y_s \leq \ga \right)\\
                &= \Pr \left(\inf_{0\leq s \leq h} \left( \ms s + W_s \right) \leq \gm - \ga \right)\\
                & = 1 - \Phi\left(\gl_{-}(\gm)\right) + e^{2\ms (\gm - \ga)} \bigl[1 - \Phi \left(\gl_{+}(\gm)\right)\bigr],\\
        \end{align*}
    where $\gl_{\pm}(z) := (z-\ga)h^{-1/2} \pm \ms h^{1/2}$. Combining these results gives the bound
        \begin{align}
        \label{eq:combin}
            \Pr \bigg( \sup_{t \leq s \leq t+h} (X_s &- g_t(s)) \geq 0 \Big| X_t = z \bigg)\notag\\
             & \leq  1 - \Phi \left( \gb_{-}(\gm) \right) + e^{2\mh(g_{t,h} - \gm)} \bigl[ 1 - \Phi\left( \gb_{+}(\gm) \right) \bigr]\notag\\
             & \qquad+ 1 - \Phi \left( \gl_{-}(\gm) \right) + e^{2 \ms (\gm - \ga)} \bigl[ 1 - \Phi\left(\gl_{+}(\gm) \right) \bigr]\notag\\
             & \leq \frac{1}{\sqrt{2\pi}} \frac{1}{\gb_{-}(\gm)} e^{-\frac{1}{2} \gb_{-}^2(\gm)} + e^{2\mh (g_{t,h} - \gm)} \frac{1}{\sqrt{2\pi}} \frac{1}{\gb_{+}(\gm)} e^{-\frac{1}{2} \gb_{+}^2(\gm)}\notag\\
             & \qquad + \frac{1}{\sqrt{2\pi}} \frac{1}{\gl_{-}(\gm)} e^{-\frac{1}{2} \gl_{-}^2(\gm)} + e^{2 \ms (\gm - \ga)} \frac{1}{\sqrt{2\pi}} \frac{1}{\gl_{+}(\gm)} e^{-\frac{1}{2} \gl_{+}^2(\gm)},
        \end{align}
    using Mill's inequality.

    Now choose $\ga:= g(t) - h^{1/8}$. Then for the first term on the right-hand side of \eqref{eq:combin} we have
        \begin{align*}
            \gb_{-}(\gm) = (g(t) - \km h - (g(t) - h^{1/4})) h^{-1/2} - \mh h^{1/2} = h^{-1/4} \bigl[1 - (\km + \mh) h^{3/4}\bigr],
        \end{align*}
    and so clearly
        \[  \frac{1}{\gb_{-}(\gm)} e^{-\frac{1}{2} \gb_{-}^2 (\gm)} = o(h)  \]
    as $h \rightarrow 0$. Similarly, as $\gl_{\pm}(\gm) = h^{-3/8} (1 + o(1))$, other terms are also $o(h)$. Thus for $I_1$ in~\eqref{eq:split_tau} we have
        \begin{align}
        \label{eq:final_aft_low}
            I_1 = o(h),
        \end{align}
    and hence the integral $I_1$ does not contribute to the density of $\tau$.

    Next we consider $z \in [g(t) - h^{1/4}, g(t)]$, using essentially the same argument but without conditioning on starting at the point $\gm$, which also eliminates the need to reduce the problem to a level crossing. Returning to the original boundary $g(t)$, this gives
        \begin{align}
        \label{eq:final_aft_up}
            \Pr \bigg( \sup_{t\leq s \leq t+h} (X_s &- g(s)) \geq 0 \Big| X_t = z \bigg) \leq  1 - \Phi \left( \chi_{-}(z) \right) + e^{2(\mh + \km)(g(t) - z)} \bigl[ 1 - \Phi\left( \chi_{+}(z) \right) \bigr]\notag\\
             & \qquad + 1 - \Phi \left( \gl_{-}(\gm) \right) + e^{2 \ms (\gm - \ga)} \bigl[ 1 - \Phi\left(\gl_{+}(\gm) \right) \bigr]\notag\\
             & =  1 - \Phi \left( \chi_{-}(z) \right) + e^{2(\mh + \km)(g(t) - z)} \bigl[ 1 - \Phi\left( \chi_{+}(z) \right) \bigr] + o(h),
        \end{align}
    where $\chi_{\pm}(z) := (g(t) - z)h^{-1/2} \pm (\mh + \km) h^{1/2}$. Returning to \eqref{eq:split_tau} and using \eqref{eq:break_tau_cross}, \eqref{eq:final_before} and \eqref{eq:final_aft_up}, we have
        \begin{align*}
            I_2 &\leq  \int_{g(t)-h^{1/4}}^{g(t)} 2 A_h(g,t) (g(t) - z) \left[1 - \Phi \left( \chi_{-}(z) \right) + e^{2(\mh + \km)(g(t) - z)} \left( 1 - \Phi\left( \chi_{+}(z) \right) \right)\right] dz\\
                 &\qquad+ o(h)\\
                 &=: 2 A_h(g,t) \int_{g(t)-h^{1/4}}^{g(t)} \left( J_1(z) + J_2(z) \right) dz + o(h).
        \end{align*}
    Consider the first term of the integrand (the second is treated in the same manner, since the exponential factor is $(1+ o(1))$). Denoting $(\km + \mh)$ by $\ov{K}$ and changing variables $y = (g(t) - z)/\sqrt{h}$, we have
        \begin{align*}
            \int_{g(t)-h^{1/4}}^{g(t)} J_1(z) dz &= h \int_0^{h^{-1/4}} y \left(1- \Phi(y - \ov{K}\sqrt{h})\right) dy\\
              & \leq h \int_0^{\infty} y \left(1- \Phi(y - \ov{K}\sqrt{h})\right) dy\\
              & = h\left(\int_0^{\infty} (u + \ov{K} \sqrt{h}) (1 - \Phi(u)) du + \int_{-\ov{K}\sqrt{h}}^{0}(u + \ov{K} \sqrt{h} ) (1- \Phi(u)) du\right)\\
              & \leq h \left( \frac{1}{2} \Exp \left(Z^2 ; Z>0 \right) + \ov{K} \sqrt{h} \Exp\left(Z; Z>0 \right) + \ov{K}^2 h \right) = \frac{1}{4}h + o(h),
        \end{align*}
    where $Z$ is a standard normal random variable. One can similarly show that the contribution from $J_2(z)$ will be of the same form. This gives
        \[  I_2 \leq    A_h(g,t) h + o(h).  \]
    Combining the bound with \eqref{eq:split_tau} and \eqref{eq:final_aft_low}, we conclude that
        \[ h^{-1}\Pr_x (\tau \in (t, t+h)) \leq A_h(g,t) + o(1).    \]
    Passing to the limit as $h \rightarrow 0$ in the above expression yields a bound for the density of $\tau$, with the limiting value of $A_h(g,t)$ given by $B(g,t)$ defined in \eqref{eq:Bt_def}. This completes the proof of the theorem.
    \end{proof}

    For diffusions on $\mathbb{R}$ we can extend this result, considering the crossing of a lower boundary. Define
        \[  \un{g}(t) := \min_{0 \leq s \leq t} g(s).   \]
    Then for the random variable $\rho := \inf\{t>0 : X_t \leq g(t)\}$ we have the following assertion.

    \begin{cor}
    \label{cor:lower_den_bound}
        In case {\em [A]}, let $X_t$ and $g(t)$ be as in Theorem~{\em \ref{th:density_bound}}, except with $g(0) < x$. Set
            \[  L_t := \inf_{\un{g}(t) \leq y < \infty } \left(\mu'(y) + \mu^2(y)\right).   \]
        Then $\rho$ has a density $p_{\rho}(t)$ which satisfies
            \[  p_{\rho}(t) \leq C(g,t),    \]
        where
            \[  C(g,t) := \frac{1}{t} (x - g(t) + \kp t) q(t,x,g(t)) e^{G(g(t)) - G(x) -(t/2) L(t)}.    \]
    \end{cor}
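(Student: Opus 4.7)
The natural plan is to reduce the lower-boundary problem to the upper-boundary problem already handled by Theorem~\ref{th:density_bound} by reflecting the process through the origin. Define $\widetilde{X}_t := -X_t$ and $\widetilde{g}(t) := -g(t)$. Since the diffusion interval is all of $\mathbb{R}$, \ito's formula (or a direct calculation) shows that $\widetilde{X}_t$ satisfies an SDE of the form \eqref{eq:Xt_diff} with unit diffusion coefficient, drift $\widetilde{\mu}(y) := -\mu(-y)$, and initial value $\widetilde{X}_0 = -x$. The differentiability of $\widetilde{\mu}$ follows from that of $\mu$. The event $\{X_t \le g(t)\}$ is the same as $\{\widetilde{X}_t \ge \widetilde{g}(t)\}$, and $g(0)<x$ is equivalent to $\widetilde{g}(0) > -x$, so $\widetilde{g}$ is an upper boundary for $\widetilde{X}$ and $\rho$ coincides with the first passage time $\widetilde{\tau}$ of $\widetilde{X}$ above $\widetilde{g}$.

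Next I would verify that the Lipschitz hypothesis \eqref{eq:lip_assump} carries over with $K^\pm$ interchanged: writing $\widetilde{g}(t+h)-\widetilde{g}(t) = -(g(t+h)-g(t))$, the assumption on $g$ becomes $-K^+ h \le \widetilde{g}(t+h)-\widetilde{g}(t) \le K^- h$, so $\widetilde{K}^- = K^+$ and $\widetilde{K}^+ = K^-$. Theorem~\ref{th:density_bound} then applies to $(\widetilde{X},\widetilde{g})$ and yields the existence of a density $p_{\widetilde{\tau}}(t) = p_\rho(t)$ with bound $B(\widetilde{g},t)$ as in \eqref{eq:Bt_def}.

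It remains to rewrite $B(\widetilde{g},t)$ in the variables of the original problem. Four identifications do the work. First, $\overline{\widetilde{g}}(t) = \max_{s\le t}(-g(s)) = -\underline{g}(t)$, so the infimum in the definition of $\widetilde{M}(t)$, taken over $y \le -\underline{g}(t)$, of $\widetilde{\mu}'(y)+\widetilde{\mu}^2(y) = \mu'(-y)+\mu^2(-y)$, becomes (after the substitution $w=-y$) the infimum over $w \ge \underline{g}(t)$ of $\mu'(w)+\mu^2(w)$, which is exactly $L(t)$. Second, for any anchor $y_0$ the substitution $w=-z$ in
\[
\widetilde{G}(\widetilde{g}(t)) - \widetilde{G}(-x) = \int_{-x}^{-g(t)} -\mu(-z)\, dz
\]
gives $\int_x^{g(t)} \mu(w)\,dw = G(g(t)) - G(x)$. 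Third, the Brownian transition density is symmetric, $q(t,-x,-g(t)) = q(t,x,g(t))$. Finally, in case [A] we have $\bar{x}=x$, hence $\overline{\widetilde{x}}=\widetilde{x}=-x$, so the linear prefactor reads
\[
\widetilde{g}(t) + \widetilde{K}^- t - \overline{\widetilde{x}} = -g(t) + K^+ t + x = x - g(t) + K^+ t.
\]
Plugging these four identities into $B(\widetilde{g},t)$ produces $C(g,t)$ and completes the proof.

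No step here is delicate analytically; the only thing to be careful about is the bookkeeping of signs and the swap of $K^-$ and $K^+$ in the Lipschitz bound, which is also the place where a sign error would silently propagate, so I would present the four identifications explicitly rather than leaving them to the reader.
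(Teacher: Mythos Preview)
Your proof is correct and follows exactly the route the paper takes: the paper simply states that the assertion is a direct consequence of Theorem~\ref{th:density_bound} applied to the diffusion $-X_t$ crossing the upper boundary $-g(t)$, and you have written out the sign-tracking (the swap $\widetilde K^\pm=K^\mp$, $\widetilde M(t)=L(t)$, $\widetilde G$-difference, symmetry of $q$, and the prefactor) that the paper leaves to the reader.
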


    This assertion is a direct consequence of Theorem~\ref{th:density_bound}, applied to the diffusion $-X_t$ crossing the upper boundary $-g(t)$.

    Similar ideas can be used to obtain lower bounds for the density of the first passage time, both for upper and lower boundaries, although again we are restricted to the case [A]. We use the same notation as previously, however instead of $M(t)$ we define $\LM$ by
    \[  \LM := \sup_{-\infty < y \leq \ov{g}(t)} \left(\mu'(y) + \mu^2(y)\right).   \]

    \begin{thm}
    \label{th:density_lower_bound}
        In case {\em [A]}, let $X_t$ and $g(t)$ be as in Theorem~{\em \ref{th:density_bound}}.  Assume that
            \begin{align}
            \label{eq:lower_bound_ineq}
                x < \min_{0 \leq t \leq 1} \left(g(t) - \kp t\right).
            \end{align}
        Then the first passage time $\tau$ of the process $X_t$ of the boundary $g(t)$ has a density $p_{\tau}(t)$ which is bounded from below by
            \begin{align}
            \label{eq:density_lower_bound}
                p_{\tau}(t) \geq \LB,
            \end{align}
        where $\LB$ is given by
            \begin{align*}
                \LB := \frac{1}{t} (g(t) - \kp t - x) q(t,x,g(t)) e^{G(g(t)) - G(x) -(t/2) \LM}.
            \end{align*}
    \end{thm}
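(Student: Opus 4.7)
The plan is to mirror the proof of Theorem~\ref{th:density_bound} step by step, reversing every inequality. For fixed $t \in (0,1)$ and small $h > 0$, condition on $X_t$ and simply discard the (non-negative) contribution from $z \in (-\infty, g(t) - h^{1/4})$, giving
\begin{align*}
\Pr_x(\tau \in (t, t+h)) \geq \int_{g(t) - h^{1/4}}^{g(t)} \Pr_x(\tau \in (t, t+h) \,|\, X_t = z) \, p(t,x,z) \, dz.
\end{align*}
By the Markov property the conditional probability factors as in~\eqref{eq:break_tau_cross}, so the task reduces to producing lower bounds on each of the two factors, uniformly in $z \in [g(t) - h^{1/4}, g(t)]$.

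For the first factor, the earlier linear boundary $g_t$ is replaced by $g^*_t(s) := g(t) - \kp(t-s)$, which by the upper Lipschitz bound in~\eqref{eq:lip_assump} lies \emph{below} $g$ on $[0, t]$, and whose initial value $g(t) - \kp t$ exceeds $x$ precisely under condition~\eqref{eq:lower_bound_ineq}. The inclusion $\{\sup_{0 \leq s \leq t}(X_s - g^*_t(s)) < 0\} \subseteq \{\sup_{0 \leq s \leq t}(X_s - g(s)) < 0\}$ reverses the direction. Applying Lemma~\ref{lem:bridge_prob}(i) with $A$ equal to the event on the left and noting that paths in $A$ stay below $\ov g(t)$ (so that $\mu'(X_u) + \mu^2(X_u) \leq \LM$ and hence $N(t) \leq t\LM$, giving $e^{-N(t)/2} \geq e^{-t\LM/2}$ on $A$), combined with the Brownian bridge formula~\eqref{eq:BBridge} applied to $g^*_t$, yields
\begin{align*}
\Pr_x\Bigl(\sup_{0 \leq s \leq t}(X_s - g(s)) < 0 \,\Big|\, X_t = z\Bigr) \geq \frac{q(t,x,z)}{p(t,x,z)} e^{G(z) - G(x) - (t/2)\LM}\bigl(1 - e^{-2 b^*_1 b^*_2}\bigr),
\end{align*}
with $b^*_1 = g(t) - \kp t - x > 0$ and $b^*_2 = (g(t) - z)/t$. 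Since $b^*_1 b^*_2 = O(h^{1/4})$ uniformly, Taylor expansion gives $1 - e^{-2b^*_1 b^*_2} = 2 b^*_1 b^*_2 (1 + o(1))$.

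For the second factor, bound $g$ from above on $[t, 1]$ by $\tilde g(s) := g(t) + \kp(s-t)$, so that $\{X \text{ crosses } \tilde g\} \subseteq \{X \text{ crosses } g\}$. To lower-bound the probability that $X$ crosses $\tilde g$ in $(t, t+h)$ starting from $z$, couple $X$ with the dominated drifted Brownian motion $Y_s = z + \check\mu_h (s-t) + (W_s - W_t)$, where $\check\mu_h := \inf_{y \in I_h} \mu(y)$ is taken over the narrow interval $I_h := [g(t) - 2h^{1/4}, g(t) + 2h^{1/4}]$ (finite by continuity of $\mu$, with $\check\mu_h \to \mu(g(t))$ as $h \to 0$). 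While $X$ remains in $I_h$, $Y \leq X$ pathwise, so any crossing of $\tilde g$ by $Y$ lifts to one by $X$; a standard Gaussian-tail estimate shows $\Pr_z(X \text{ exits } I_h \text{ within time } h) = o(h)$ uniformly in $z \in [g(t) - h^{1/4}, g(t)]$. The explicit distribution of the supremum of Brownian motion with drift (cf.\ (1.1.4), p.~250 of~\cite{Borodin_etal_xx02}) then supplies the required lower bound, which mirrors~\eqref{eq:final_aft_up} with $\hat\mu + \km$ replaced by $\check\mu_h - \kp$ and each inequality reversed.

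Collecting the two estimates and performing the change of variables $y = (g(t) - z)/\sqrt h$, the same computation as in the proof of Theorem~\ref{th:density_bound} gives
\begin{align*}
\Pr_x(\tau \in (t, t+h)) \geq h \cdot \frac{1}{t}(g(t) - \kp t - x) q(t,x,g(t)) e^{G(g(t)) - G(x) - (t/2)\LM} (1 + o(1)),
\end{align*}
so dividing by $h$ and letting $h \to 0$ yields $p_\tau(t) \geq \LB$ (with the existence of the density already supplied by Theorem~\ref{th:density_bound}). The main new delicacy, compared with the upper-bound proof, is in the second factor: the upper-bound argument could freely use global suprema $\mh, \ms$ and dominating/dominated processes that only had to control the problem on the crossing event, whereas here the dominated process $Y$ must stay below $X$ on a positive-probability crossing event. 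Localising the drift to $I_h$ and separately verifying that the exit event is of order $o(h)$ is the principal new step.
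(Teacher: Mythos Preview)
Your proposal is correct and follows precisely the route the paper intends: it mirrors the proof of Theorem~\ref{th:density_bound} with the linear comparison boundary replaced by $g^*_t(s)=g(t)+\kp(s-t)$, exactly as the paper's (very brief) proof specifies, and fills in the reversed inequalities at each stage. Your highlighted ``new delicacy'' in the second factor---localising the drift to a shrinking interval and controlling the exit probability---is in fact the direct analogue of the $[\alpha,g(t)]$ localisation and exit estimate already present in the upper-bound proof (with $\alpha=g(t)-h^{1/8}$), just with the comparison direction reversed, so no genuinely new ingredient is required.
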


    \begin{proof}
        The scheme of proof is the same as for Theorem~\ref{th:density_bound}, with straightforward changes to obtain a lower bound. In particular, instead of considering the linear boundary crossing for $g_t(s)$, we consider similar probabilities for the linear function
            \[  \LG = g(t) + \kp (s-t). \]
        Inequality~\eqref{eq:lower_bound_ineq} arises from the need for this boundary to initially be above $x$.
    \end{proof}

    A similar result holds for lower boundaries as well.

    \begin{cor}
        In case {\em [A]}, let $X_t$ and $g(t)$ be as in Theorem~{\em \ref{th:density_lower_bound}}, except with $g(0) < x$. Define $\LL$ by
            \[  \LL := \inf_{\un{g}(t) \leq y < \infty} \left(\mu'(y) + \mu^2(y)\right).    \]
        If $x > \sup_{0 \leq t \leq 1} \left( g(t) + \km t \right),$ then $\rho$ has a density $p_{\rho}(t)$, which is bounded from below by
            \[  p_{\rho}(t) \geq \LC(g,t),  \]
        where
            \[  \LC(g,t) := \frac{1}{t} (x - g(t) + \kp t) q(t,x,g(t)) e^{G(g(t)) - G(x) -(t/2) \LL(t)}.    \]
    \end{cor}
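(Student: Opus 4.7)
The plan is to derive this lower bound in exactly the same way the first corollary derived its upper bound from Theorem~\ref{th:density_bound}: apply Theorem~\ref{th:density_lower_bound} to the reflected process $\widetilde{X}_t := -X_t$ crossing the upper boundary $\widetilde{g}(t) := -g(t)$. The first passage time of $X_t$ below $g$ coincides with the first passage time of $\widetilde{X}_t$ above $\widetilde{g}$, so a lower bound for the density of the latter transfers directly to $p_{\rho}(t)$.

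First I would verify that $\widetilde{X}_t$ fits the framework of Theorem~\ref{th:density_lower_bound}. Writing $\widetilde{W}_t := -W_t$ (still a Brownian motion), It\^o's formula gives $d\widetilde{X}_t = \widetilde{\mu}(\widetilde{X}_t)\,dt + d\widetilde{W}_t$ with $\widetilde{\mu}(y) := -\mu(-y)$, which is differentiable, and the diffusion interval of $\widetilde{X}_t$ is again all of $\mathbb{R}$. The condition $g(0)<x$ is exactly $\widetilde{g}(0) > \widetilde{x}$ where $\widetilde{x} := -x$, so $\widetilde{g}$ is an admissible upper boundary. Reversing the inequalities in \eqref{eq:lip_assump} shows that the Lipschitz constants transform as $\widetilde{K}^{\pm} = K^{\mp}$, and the hypothesis $x > \sup_{0\le t\le 1}(g(t) + \km t)$ becomes $\widetilde{x} < \min_{0\le t\le 1}(\widetilde{g}(t) - \widetilde{K}^{+}t)$, which is \eqref{eq:lower_bound_ineq} for the reflected problem.

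Next I would translate the quantities appearing in $B^*$ from the $\widetilde{X}$-world back to the original one. Choosing the reference point for $\widetilde{G}$ to be $-y_0$, a change of variable $v = -u$ gives $\widetilde{G}(y) = G(-y) - G(y_0)$, so the difference $\widetilde{G}(\widetilde{g}(t)) - \widetilde{G}(\widetilde{x})$ equals $G(g(t)) - G(x)$. The Brownian transition density is symmetric, giving $q(t,\widetilde{x},\widetilde{g}(t)) = q(t,x,g(t))$. Finally, by the substitution $z = -y$,
\[
\widetilde{M}^*(t) = \sup_{-\infty < y \le \overline{\widetilde{g}}(t)} \bigl(\widetilde{\mu}'(y) + \widetilde{\mu}^2(y)\bigr) = \sup_{\underline{g}(t)\le z < \infty} \bigl(\mu'(z) + \mu^2(z)\bigr),
\]
which is the quantity $L^*(t)$ entering the statement. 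Inserting these identifications into the bound $B^*(\widetilde{g},t)$ of Theorem~\ref{th:density_lower_bound} and recalling $\widetilde{K}^{+} = K^{-}$ and $\widetilde{x} = -x$, the prefactor $\frac{1}{t}(\widetilde{g}(t) - \widetilde{K}^{+}t - \widetilde{x})$ becomes $\frac{1}{t}(x - g(t) - \km t)$, yielding the asserted expression for $C^*(g,t)$.

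There is no genuine obstacle here; the only step requiring care is bookkeeping the sign reversals — in particular, checking that the swap $K^{\pm}\leftrightarrow K^{\mp}$ together with the passage from $\overline{\widetilde{g}}(t) = -\underline{g}(t)$ produces exactly the stated domain of the supremum defining $L^*(t)$, and that the transformed hypothesis \eqref{eq:lower_bound_ineq} guarantees positivity of the prefactor. Once these identifications are in place, the conclusion $p_\rho(t) = p_{\widetilde{\tau}}(t) \ge B^*(\widetilde{g},t) = C^*(g,t)$ follows immediately from Theorem~\ref{th:density_lower_bound}.
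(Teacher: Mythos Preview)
Your approach is exactly the paper's: the authors simply state that the corollary ``is a direct consequence of Theorem~\ref{th:density_lower_bound}, considering the diffusion $-X_t$ crossing the upper boundary $-g(t)$,'' and your reflection argument carries this out in detail. One remark: your bookkeeping is correct, and in fact it exposes two misprints in the printed statement --- the reflection gives $\widetilde{M}^{*}(t)=\sup_{\underline{g}(t)\le z<\infty}(\mu'(z)+\mu^2(z))$ (a supremum, not the infimum appearing in the displayed definition of $L^{*}(t)$), and the prefactor comes out as $\frac{1}{t}(x-g(t)-K^{-}t)$ rather than $\frac{1}{t}(x-g(t)+K^{+}t)$; so your derived formula is the right one, even though it does not literally match the stated $C^{*}(g,t)$.
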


    Similarly to the case of Corollary~\ref{cor:lower_den_bound}, the above assertion is a direct consequence of Theorem~\ref{th:density_lower_bound}, considering the diffusion $-X_t$ crossing the upper boundary $-g(t)$.

    \begin{rem}
    As we said in Section~\ref{sec:setup}, the choice of the reference processes used is largely arbitrary, within some constraints. If we have further information for a certain diffusion, the above results could be improved by substituting this diffusion as the reference process. In particular, several processes have the same bridges as either Brownian motion or Bessel processes, see \cite{Borodin_0505} and references therein. In other words, if $X_s$ is a process with the same bridge as a Bessel process, we have
        \[  \Pr_x \left(\sup_{0 \leq s \leq t} \left(X_s - g_t(s)\right) < 0 \Big| X_t = z \right) = \Pr_x \left(\sup_{0 \leq s \leq t} \left(R_s - g_t(s)\right) < 0 \Big| R_t = z \right).    \]
    This could eliminate the need to use Lemma~\ref{lem:bridge_prob}, proceeding straight to \eqref{eq:BBridge} or its equivalent Lemma~\ref{lem:BesBridge}. This eliminates the exponential term in \eqref{eq:Bt_def}, however it also relies on knowing the transition density for the diffusion in question.
    \end{rem}

    \begin{rem}
    Further, it is bounds for these conditional probabilities that limit the results obtained above and their accuracy. In particular, Lemma~\ref{lem:BesBridge} gives an excessive bound, of which tightening could improve on the bounds presented here. In case [B] it is also this lemma that needs to be extended to obtain a lower bound on the crossing density and to obtain results for lower boundaries.
    \end{rem}

    Observe that the above results can be extended to some time-inhomogeneous diffusions as well. In general, the transformation leading to a constant diffusion coefficient can result in a drift term which inhibits the use of the above methods. Even assuming we begin with a process with constant diffusion, a similar problem is encountered with the Girsanov density in Lemma~\ref{lem:bridge_prob}.

    However, for some time-inhomogeneous diffusions there is an obvious approach to avoid these complications. If our time-inhomogeneous process $Y_t$ is a function of a time-homogeneous diffusion $X_t$ and time $t$, the idea is to manipulate the boundary crossing probability so as to incorporate the time-inhomogeneity into the boundary. As a simple example, if $Y_t = r(t) X_t$, with $r(t) > 0$, then we can write
        \[  \Pr_x\left(\sup_{0 \leq t \leq 1} (Y_t - g(t)) > 0 \right) = \Pr_x\left(\sup_{0 \leq t \leq 1} \left(X_t - \frac{g(t)}{r(t)}\right) > 0 \right) \]
    and proceed as before.

\section{Approximation Rates for Boundary Crossing\\ Probabilities}
\label{sec:bcp}

    In this section we use the bounds of the previous section to extend the results of \cite{Borovkov_etal_0305} from the Brownian motion case to general diffusions. The problem considered is calculating the probability that a given diffusion will stay between two boundaries over the time interval $[0,1]$. That is, for a diffusion $X_t$ and two functions $g_{\pm}$ on $[0,1]$ such that $\gi(0) < x < \gp(0)$, we need to evaluate
        \begin{align}
        \label{eq:prb_def}
            \Pr_x \left( \gi(t) < X_t < \gp(t), t \in [0,1] \right) =: \Prb{\gi}{\gp}.
        \end{align}
    Again, this can be extended to the time interval $[0,T]$, by a change of scale. We use the same notation for one-sided boundaries by replacing the redundant boundary with $\pm\infty$.

    An exact calculation of the above probability is possible only for a small number of diffusions and boundaries. Therefore in the general case, it is important to have an upper bound for the difference between probability \eqref{eq:prb_def} and the corresponding probability for a pair of simpler boundaries which approximate the original ones and, at the same time, make computing the boundary crossing probability feasible.

    We denote the uniform norm of a function on $[0,1]$ by $||\cdot||$, that is $||g|| = \sup_{0 \leq t \leq 1} |g(t)|$.

    Define $\Dep(g)$ to be the probability that the process $X_t$ crosses the upper boundary $g$ at some point $t \in [0, 1]$ without crossing the boundary $g + \ep$, $\ep>0$, on that time interval:
        \[  \Dep(g) := \Prb{-\infty}{g + \ep} - \Prb{-\infty}{g}.   \]
    Define $\Depm(g)$ similarly for lower boundaries:
        \[  \Depm(g) := \Prb{g - \ep}{\infty} - \Prb{g}{\infty}.    \]

    \begin{thm}
    \label{th:bdy_cross}
        Let $g_{\pm}$ each satisfy condition \eqref{eq:lip_assump}, and consider functions $f_{\pm}$ on $[0,1]$ such that $||g_{\pm} - f_{\pm}|| \leq \ep$ for some $\ep > 0$. For $\tp \in [0,1)$, let
            \[  \over{B}(g) := \sup_{\tp \leq t \leq 1} B(g,t) \;\; \mbox{ and }\;\; \over{C}(g) := \sup_{\tp \leq t \leq 1} C(g,t),    \]
            \begin{align}
            \label{eq:mu_star}
                \mlow := \inf_{\ell < x \leq g(0) + \kp + \ep} \mu(x) \;\; \mbox{ and }\;\; \mup := \sup_{g(0) - \km - \ep \leq x < \infty} \mu(x),
            \end{align}
        and set $\ks := \max\{0, \kp - \mlow\}$ and $\ka := \max\{0, \km + \mup\}$. If $\mlow > -\infty$, then
            \[  \left| \Prbm{-\infty}{\gp} - \Prbm{-\infty}{\fp} \right| \leq \max\left\{ \Dep(\gp), \Dep(\gp - \ep) \right\}, \]
        where
            \begin{align}
            \label{eq:Dep_bound}
                \Dep(g) \leq    \left(\sqrt{\frac{2}{\pi}} \left( \frac{1}{\sqrt{1-\tp}} + 2\over{B}(g) \sqrt{1-\tp} \right) + 2 \ks\right) \ep.
            \end{align}
        In case {\em [A]}, if we also have $\mup < \infty$, then
            \[  \left|\Prbm{\gi}{\gp} - \Prbm{\fm}{\fp}\right| \leq \max\left\{ \Dep(\gp) + \Depm(\gi), \Dep(\gp - \ep) + \Depm(\gi + \ep) \right\},    \]
        where
            \begin{align}
            \label{eq:Depm_bound}
                \Depm(g) \leq \left(\sqrt{\frac{2}{\pi}} \left( \frac{1}{\sqrt{1-\tp}} + 2\over{C}(g)  \sqrt{1-\tp} \right) + 2 \ka\right)\ep.
            \end{align}
    \end{thm}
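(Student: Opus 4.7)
My plan is to reduce the approximation-error estimate to bounds on the gap probabilities $\Dep(g)$ and $\Depm(g)$ via monotonicity, and then to prove those bounds by conditioning on the first-passage time and substituting the density bounds $B(g,t)$ and $C(g,t)$ from the previous section.

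For the monotonicity step, observe that $\gp - \ep \leq \fp \leq \gp + \ep$ and $\gi - \ep \leq \fm \leq \gi + \ep$ pointwise, so enlarging (resp.\ contracting) the admissible strip gives $\Prbm{\gi+\ep}{\gp-\ep} \leq \Prbm{\fm}{\fp} \leq \Prbm{\gi-\ep}{\gp+\ep}$. Each one-boundary shift such as $\Prbm{\gi}{\gp+\ep} - \Prbm{\gi}{\gp}$ equals the probability of crossing $\gp$ but not $\gp+\ep$ while staying above $\gi$, hence is bounded by $\Dep(\gp)$. Four such shifts connect the sandwiching extremes to $\Prbm{\gi}{\gp}$, and considering the two signs of $\Prbm{\fm}{\fp} - \Prbm{\gi}{\gp}$ separately yields the stated $\max$ of two sums; the one-sided case (set $\gi \equiv -\infty$) is analogous and drops the $\Depm$ terms.

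Next I write $\Dep(g) = \Pr_x(\tau \leq 1,\, \sup_{s \leq 1}(X_s - g(s)) < \ep)$ with $\tau$ the first-passage time of $g$, and condition on $\tau = t$ (so $X_t = g(t)$ by path continuity, while $X_s < g(s) \leq g(s) + \ep$ is automatic for $s<t$). By the strong Markov property,
\[
\Dep(g) = \int_0^1 p_\tau(t)\, Q_t^\ep\, dt, \qquad Q_t^\ep := \Pr\bigl(X_s < g(s) + \ep,\ t \leq s \leq 1 \,\big|\, X_t = g(t)\bigr).
\]
The key technical step is bounding $Q_t^\ep$. On the event in question $X$ stays in $\{y \leq g(s) + \ep\}$, so its drift is $\geq \mlow$ there; hence $X_{s+t} - g(t)$ dominates the drifted Brownian motion $u \mapsto \mlow u + W_u$ on $[0, 1-t]$ from below. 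Using in addition the Lipschitz bound $g(s+t) - g(t) \leq \kp s$, one reduces $Q_t^\ep$ to a supremum-crossing probability for a drifted Brownian motion against the constant boundary $\ep$:
\[
Q_t^\ep \leq \Pr\Bigl(\sup_{0 \leq u \leq 1-t}(W_u - \ks u) < \ep\Bigr) = \Phi(\alpha + \beta) - e^{-2\ks\ep}\Phi(\alpha - \beta),
\]
with $\alpha = \ks\sqrt{1-t}$, $\beta = \ep/\sqrt{1-t}$. Rewriting the right-hand side as $[\Phi(\alpha+\beta) - \Phi(\alpha-\beta)] + (1 - e^{-2\ks\ep})\Phi(\alpha-\beta)$ and using $\phi \leq (2\pi)^{-1/2}$ on the first bracket together with $1 - e^{-x} \leq x$ on the second yields
\[
Q_t^\ep \leq \sqrt{\tfrac{2}{\pi}}\,\frac{\ep}{\sqrt{1-t}} + 2\ks\ep.
\]

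To finish, insert this estimate and split the resulting integral at $\tp$. On $[0, \tp]$ bound $1/\sqrt{1-t} \leq 1/\sqrt{1-\tp}$ and $\int_0^{\tp} p_\tau \leq 1$; on $[\tp, 1]$ use Theorem~\ref{th:density_bound} to replace $p_\tau(t)$ by $\over{B}(g)$ and compute $\int_{\tp}^1 dt/\sqrt{1-t} = 2\sqrt{1-\tp}$. Assembling the pieces gives \eqref{eq:Dep_bound}. The bound \eqref{eq:Depm_bound} on $\Depm(g)$ follows from the same scheme applied to $-X_t$ crossing the upper boundary $-g(t)$, with Corollary~\ref{cor:lower_den_bound} in place of Theorem~\ref{th:density_bound} — which is why the two-sided statement is restricted to case~[A]. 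I expect the main obstacle to be the estimate on $Q_t^\ep$: one has to combine the Lipschitz slope $\kp$ and the diffusion's drift range cleanly enough that the leading $\ep$-contribution separates into the integrable piece $\ep/\sqrt{1-t}$ (suitable for integration against $\over{B}(g)$) and a $\tp$-independent residual $2\ks\ep$.
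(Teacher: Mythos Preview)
Your proposal is correct and follows essentially the same route as the paper: the monotonicity sandwich reducing to $\Dep$ and $\Depm$, the strong-Markov decomposition at $\tau$, the comparison with a $\mlow$-drifted Brownian motion against a linear boundary derived from the Lipschitz constant, the explicit $\Phi$-formula bounded via $\Phi' \leq (2\pi)^{-1/2}$ and $1-e^{-x}\leq x$, and the split of the resulting integral at $t_0$ with $p_\tau \leq \overline{B}_{t_0}(g)$ on $[t_0,1]$. The only point worth tightening is the phrasing of the comparison step: rather than saying ``on the event in question $X$ stays in $\{y\le g(s)+\ep\}$, so its drift is $\ge\mlow$'', construct $Y$ with drift $\mlow$ driven by the same Brownian motion and note that $Y_s\le X_s$ holds up to the first exit of $X$ from the region --- this avoids the appearance of circularity and is exactly how the paper states it.
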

    \begin{rem}
            (i) Note that the two boundaries $g_{\pm}$ may satisfy condition \eqref{eq:lip_assump} for different constants $K^{\pm}$. For the purposes of this theorem, we have assumed the constants are the same, but if further accuracy is required, this assumption can easily be dropped.

            (ii) The choice of $\tp$ is arbitrary and can be used to minimise the upper bounds for specific diffusion processes and boundaries.

            (iii) By setting
                    \[  \overline{B}(g) := \sup_{0 \leq t \leq 1} B(g,t)    \]
                we obtain the simpler, but less accurate, bound
                    \[  \Dep(g) \leq \left( 4 \sqrt{\frac{\overline{B}(g)}{\pi}} + 2 \ks \right) \ep,   \]
                and similarly for $\Depm(g)$.
    \end{rem}
    \begin{proof}
    We begin by using an argument similar to the one employed in the proof of Theorem~1 in \cite{Borovkov_etal_0305}. Since $||g_{\pm} - f_{\pm}|| \leq \ep$, we have
        \begin{align}
        \label{eq:bound_ff}
            \Prb{\gi+\ep}{\gp-\ep} \leq \Prb{\fm}{\fp} \leq \Prb{\gi - \ep}{\gp + \ep}.
        \end{align}
    Further,
        \[  \Prb{\gi}{\gp} \leq \Prb{\gi - \ep}{\gp + \ep}, \]
    and hence
        \[  0 \leq \left[ \Prb{\gi - \ep}{\gp+ \ep} - \Prb{\gi}{\gp + \ep} \right] + \left[ \Prb{\gi}{\gp+ \ep} - \Prb{\gi}{\gp} \right].   \]
    Each of these terms can now be bounded. For the second term,
        \begin{align*}
            \Prb{\gi}{\gp+ \ep} - \Prb{\gi}{\gp} &= \Pr_x\left( 0 \leq \sup_{0 \leq t \leq 1} \left(X_t - \gp(t) \right) < \ep, \inf_{0 \leq t \leq 1} \left(X_t - \gi(t) \right) >0 \right)\\
                & \leq \Pr_x\left( 0 \leq \sup_{0 \leq t \leq 1} \left(X_t - \gp(t) \right) < \ep \right)\\
                & = \Prb{-\infty}{\gp+ \ep} - \Prb{-\infty}{\gp} = \Dep(\gp),
        \end{align*}
    while for the first term
        \begin{align*}
            \Prb{\gi - \ep}{\gp+ \ep} &- \Prb{\gi}{\gp + \ep}\\
                &= \Pr_x\left( \sup_{0 \leq t \leq 1} \left(X_t - \gp(t) - \ep \right) < 0, -\ep < \inf_{0 \leq t \leq 1} \left(X_t - \gi(t) \right) \leq 0 \right)\\
                & \leq \Pr_x\left( -\ep < \inf_{0 \leq t \leq 1} \left(X_t - \gi(t) \right) \leq 0 \right)\\
                & = \Prb{\gi - \ep}{\infty} - \Prb{\gi}{\infty} = \Depm(\gi).
        \end{align*}
    Hence we have
        \begin{align}
        \label{eq:prb_bound1}
            0 \leq \Prb{\gi - \ep}{\gp + \ep} - \Prb{\gi}{\gp} \leq \Dep(\gp) + \Depm(\gi).
        \end{align}
    Using the same argument gives
        \begin{align}
        \label{eq:prb_bound2}
            0 \leq \Prb{\gi}{\gp} - \Prb{\gi + \ep}{\gp - \ep} \leq \Dep(\gp - \ep) + \Depm(\gi + \ep).
        \end{align}
    Now from \eqref{eq:bound_ff}--\eqref{eq:prb_bound2} we have
        \[  \left|\Prb{\gi}{\gp} - \Prb{\fm}{\fp} \right| \leq \max\left\{ \Dep(\gp) + \Depm(\gi), \Dep(\gp - \ep) + \Depm(\gi + \ep) \right\}. \]
    A simpler argument also gives
        \[  \left|\Prb{-\infty}{\gp} - \Prb{-\infty}{\fp} \right| \leq \max\left\{ \Dep(\gp), \Dep(\gp - \ep) \right\}. \]
    To complete the proof of the theorem, it remains to establish \eqref{eq:Dep_bound} and \eqref{eq:Depm_bound}. Since these bounds are derived in the same way, we will only prove the former one.

    Note that
        \begin{align}
        \label{eq:orig}
            \Dep(g) &= \Pr_x \left( 0 \leq \sup_{0 \leq t\leq 1} \left(X_t - g(t)\right) < \ep \right)\notag\\
                &= \int_0^1 \Pr_x(\tau \in dt) \Pr_x \left(\sup_{t \leq s \leq 1} \left(X_s - g(s)\right) < \ep | X_t = g(t) \right)\notag\\
                &\leq \int_0^1 \Pr_x(\tau \in dt) \Pr \left( \sup_{t \leq s \leq 1} \left(X_s - \kp(s-t)\right) < \ep + g(t) | X_t = g(t) \right)\notag\\
                &= \int_0^1 \Pr_x(\tau \in dt) \Pr_{g(t)} \left( \sup_{0 \leq s \leq 1-t} \left(X_s - \kp s \right) < \ep + g(t) \right).
        \end{align}
    Consider the integrand in the last line. If we have a process $Y_s$ such that $Y_s \leq X_s$ a.s. prior to the time $\inf\{s>0 : X_s \geq g(t) + \ep + \kp s\}$, then
        \[ \Pr_{g(t)} \left( \sup_{0 \leq s \leq 1-t} \left(X_s - \kp s\right) < \ep + g(t) \right) \leq \Pr_{g(t)} \left( \sup_{0 \leq s \leq 1-t} \left(Y_s - \kp s\right) < \ep + g(t) \right).  \]
    To construct a process $Y_s$ meeting the above requirement, simply put
        \[  dY_s = \mlow ds + dW_{s}, \mbox{ } s>0, \mbox{ } Y_0 = g(t),    \]
    with $\mlow$ defined by \eqref{eq:mu_star}. For the new process, we clearly have
        \begin{align*}
            \Pr_{g(t)} \left( \sup_{0 \leq s \leq 1-t} \left(Y_s - \kp s\right) < \ep + g(t) \right) &= \Pr \left( \sup_{0 \leq s \leq 1-t} \left( W_s - (\kp -\mlow)s \right) \leq \ep \right)\\
              &\leq \Pr \left( \sup_{0 \leq s \leq 1-t} \left( W_s - \ks s \right) \leq \ep \right).
        \end{align*}
    From here we continue as per the derivation of relation~(14) in \cite{Borovkov_etal_0305}. As is well-known (see e.g. (1.1.4) on p.250 in \cite{Borodin_etal_xx02}),
        \begin{align*}
            \Pr \bigg( \sup_{0 \leq s \leq 1-t} &\left( W_s - \ks s \right) \leq \ep \bigg)\\
                & = \Phi \left( \ks \sqrt{1-t} + \frac{\ep}{\sqrt{1-t}} \right) - e^{-2\ks \ep} \Phi \left( \ks \sqrt{1-t} - \frac{\ep}{\sqrt{1-t}} \right)\\
                & \leq \Phi \left( \ks \sqrt{1-t} + \frac{\ep}{\sqrt{1-t}} \right) - \Phi \left( \ks \sqrt{1-t} - \frac{\ep}{\sqrt{1-t}} \right) + (1 - e^{-2\ks \ep}).
        \end{align*}
    Since $\Phi'(x) \leq (2 \pi)^{-1/2}$ and $1- e^{-2\ks \ep} \leq 2 \ks \ep$, we conclude that
        \[ \Pr_{g(t)} \left( \sup_{0 \leq s \leq 1-t} \left(X_s - \kp s\right) < \ep + g(t)\right) \leq \sqrt{\frac{2}{\pi (1-t)}}\;\; \ep + 2\ks \ep,  \]
     and therefore \eqref{eq:orig} implies that
        \begin{align}
        \label{eq:mod1}
            \Dep(g) \leq \ep\left(\sqrt{\frac{2}{\pi}} \int_0^1 \frac{\Pr_x(\tau \in dt)}{\sqrt{1-t}} + 2 \ks \right).
        \end{align}
    Fix $\tp \in[0,1)$ and break the integral in \eqref{eq:mod1} as follows:
        \begin{align*}
            \frac{\Dep(g)}{\ep} &\leq \sqrt{\frac{2}{\pi}} \left( \int_0^{\tp} \frac{\Pr_x(\tau \in dt)}{\sqrt{1-t}} + \int_{\tp}^1 \frac{\Pr_x(\tau \in dt)}{\sqrt{1-t}} \right) + 2 \ks\\
              &\leq \sqrt{\frac{2}{\pi}} \left( \frac{1}{\sqrt{1-\tp}} \int_0^{\tp} \Pr_x(\tau \in dt) + \int_{\tp}^1 \frac{\Pr_x(\tau \in dt)}{\sqrt{1-t}} \right) + 2 \ks\\
              &\leq \sqrt{\frac{2}{\pi}} \left( \frac{1}{\sqrt{1-\tp}} + \int_{\tp}^1 \frac{\Pr_x(\tau \in dt)}{\sqrt{1-t}} \right) + 2 \ks.\\
        \end{align*}
    Using the bound \eqref{eq:density_bound} for the density $\Pr_x(\tau \in dt)/dt$, with $\over{B}(g)$ replacing $B(g,t)$, then gives
        \begin{align*}
            \frac{\Dep(g)}{\ep} &\leq \sqrt{\frac{2}{\pi}} \left( \frac{1}{\sqrt{1-\tp}} + \over{B}(g)  \int_{\tp}^1 \frac{dt}{\sqrt{1-t}} \right) + 2 \ks\\
            & = \sqrt{\frac{2}{\pi}} \left( \frac{1}{\sqrt{1-\tp}} + 2\over{B}(g)  \sqrt{1-\tp} \right) + 2 \ks.
        \end{align*}
    The derivation of \eqref{eq:Depm_bound} uses the same argument. Theorem~\ref{th:bdy_cross} is proved.
    \end{proof}

    \begin{cor}
        For a Borel set $B$, define
            \[  \Prbbm{\gi}{\gp} := \Pr_x\left(\gi(t) < X_t < \gp(t), t\in[0, 1]; X_1 \in B \right).    \]
        Then, under the assumptions of Theorem~{\em \ref{th:bdy_cross}}, we have in the general case
            \[  | \Prbbm{-\infty}{\gp} - \Prbbm{-\infty}{\fp}| \leq \max\left\{ \Dep(\gp), \Dep(\gp - \ep) \right\}, \]
        while in case {\em [A]}
            \[  \left|\Prbbm{\gi}{\gp} - \Prbbm{\fm}{\fp}\right| \leq \max\left\{ \Dep(\gp) + \Depm(\gi), \Dep(\gp - \ep) + \Depm(\gi + \ep) \right\},  \]
        where $\Dep$ and $\Depm$ satisfy the bounds given in Theorem~{\em \ref{th:bdy_cross}}.
    \end{cor}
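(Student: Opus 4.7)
The plan is to follow the proof of Theorem~\ref{th:bdy_cross} essentially verbatim, exploiting the observation that intersecting every event in sight with $\{X_1\in B\}$ preserves all the monotonicity inequalities used there, while the quantities $\Dep$ and $\Depm$ appearing on the right-hand sides of the bounds do not involve $B$ at all and so carry over unchanged.

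First, since $||g_\pm - f_\pm||\le\ep$ and attaching the constraint $\{X_1\in B\}$ to both sides of a path-event inclusion preserves that inclusion, the $B$-analogue of \eqref{eq:bound_ff} holds:
\[
\Prbbm{\gi+\ep}{\gp-\ep}\le\Prbbm{\fm}{\fp}\le\Prbbm{\gi-\ep}{\gp+\ep},
\]
and likewise $\Prbbm{\gi}{\gp}\le\Prbbm{\gi-\ep}{\gp+\ep}$. Decomposing as in the proof of Theorem~\ref{th:bdy_cross} gives
\[
\Prbbm{\gi-\ep}{\gp+\ep}-\Prbbm{\gi}{\gp}=\bigl[\Prbbm{\gi-\ep}{\gp+\ep}-\Prbbm{\gi}{\gp+\ep}\bigr]+\bigl[\Prbbm{\gi}{\gp+\ep}-\Prbbm{\gi}{\gp}\bigr].
\]
The second bracket equals
\[
\Pr_x\!\left(0\le\sup_{0\le t\le 1}(X_t-\gp(t))<\ep,\ \inf_{0\le t\le 1}(X_t-\gi(t))>0,\ X_1\in B\right),
\]
which, after dropping both the lower-boundary constraint and the requirement $X_1\in B$, is bounded above by precisely $\Dep(\gp)$. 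Analogously the first bracket is bounded by $\Depm(\gi)$. The same reasoning applied to $\Prbbm{\gi}{\gp}-\Prbbm{\gi+\ep}{\gp-\ep}$ produces $\Dep(\gp-\ep)+\Depm(\gi+\ep)$, and combining these estimates via the sandwiching inequality yields the two-sided bound. The one-sided bound is obtained by taking $\gi\equiv-\infty$ throughout.

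The upper bounds on $\Dep(g)$ and $\Depm(g)$ claimed in the corollary are precisely those of Theorem~\ref{th:bdy_cross} and require no reproof, since those quantities are defined without reference to $B$. There is no real obstacle in this argument; the only point worth verifying is that every inequality used in the proof of Theorem~\ref{th:bdy_cross} continues to hold after intersecting the relevant event with $\{X_1\in B\}$, which is immediate because the only mechanism used throughout is the trivial monotonicity that adding a constraint cannot increase a probability.
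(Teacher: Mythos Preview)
Your proposal is correct and is exactly the approach the paper takes: its proof reads ``Essentially the same argument as for Theorem~\ref{th:bdy_cross} gives the desired result,'' and you have simply spelled out why intersecting with $\{X_1\in B\}$ preserves the monotonicity inequalities while leaving the $B$-free quantities $\Dep$ and $\Depm$ on the right-hand side unchanged.
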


    \begin{proof}
        Essentially the same argument as for Theorem~\ref{th:bdy_cross} gives the desired result.
    \end{proof}

    As a simple consequence, we have an analogue to Corollary~1 of \cite{Borovkov_etal_0305}, which shows that using approximation by piecewise-linear functions results in a  rate of convergence that is a quadratic function of the partition rank.

    \begin{cor}
        Let $g_{\pm}$ be continuously differentiable on $[0,1]$, and $g'_{\pm}$ be absolutely continuous, satisfying $|g''_{\pm}| \leq \xi < \infty$ almost everywhere. If $0 = t_0 < t_1 \cdots < t_n = 1$ is a partition of $[0,1]$ of rank $\gd = \max_{0 < i \leq n} |t_i - t_{i-1}|$, and $f_{\pm}$ are piecewise linear with nodes at the points $(t_i, g_{\pm}(t_i))$, then
            \[  \left| \Prbm{-\infty}{\gp} - \Prbm{-\infty}{\fp} \right| \leq \frac{C}{8} \xi \gd^2.    \]
        In case {\em [A]} we also have
            \[  \left|\Prbm{\gi}{\gp} - \Prbm{\fm}{\fp}\right| \leq \frac{D}{8} \xi \gd^2.  \]
        where $C$ and $D$ can be bounded explicitly using Theorem~{\em \ref{th:bdy_cross}}. In particular, for a uniform partition with $t_i = i/n$, $0 \leq i \leq n$, the convergence rate of the above probabilities is ${\cal O}(n^{-2})$.
    \end{cor}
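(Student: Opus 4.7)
The plan is to reduce the statement directly to Theorem~\ref{th:bdy_cross} by identifying the right $\ep$ for which the hypothesis $\|g_{\pm} - f_{\pm}\| \leq \ep$ is satisfied. The key ingredient is the classical sharp error bound for piecewise linear interpolation: if $h$ is $C^1$ on $[a,b]$ with $h'$ absolutely continuous and $|h''| \leq \xi$ a.e., and $\tilde h$ is the linear interpolant of $h$ at the endpoints $a,b$, then
\[
    \sup_{t \in [a,b]} |h(t) - \tilde h(t)| \leq \tfrac{1}{8}\xi (b-a)^2,
\]
with the factor $1/8$ coming from maximising $(t-a)(b-t)/2$ over $t\in[a,b]$. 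Applied on each subinterval $[t_{i-1}, t_i]$ of the partition, this gives $\|g_{\pm} - f_{\pm}\| \leq \tfrac{1}{8}\xi \gd^2$, so that the hypothesis of Theorem~\ref{th:bdy_cross} holds with $\ep := \tfrac{1}{8}\xi \gd^2$.

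With this $\ep$ chosen, the upper bounds of Theorem~\ref{th:bdy_cross} give immediately
\[
    \left| \Prbm{-\infty}{\gp} - \Prbm{-\infty}{\fp} \right|
      \leq \max\{\Dep(\gp), \Dep(\gp - \ep)\} \leq \tfrac{C}{8}\,\xi\,\gd^2,
\]
where
\[
    C := \sqrt{\tfrac{2}{\pi}}\Bigl(\tfrac{1}{\sqrt{1-\tp}} + 2\over{B}(\gp \vee (\gp-\ep))\sqrt{1-\tp}\Bigr) + 2 \ks,
\]
in the notation of Theorem~\ref{th:bdy_cross} (the maximum over the two choices of boundary being absorbed into a single constant). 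In case~[A], the analogous maximum in the two-sided bound yields a constant $D$ built out of $\over{B}(\gp), \over{B}(\gp-\ep), \over{C}(\gi), \over{C}(\gi+\ep)$ and $\ks, \ka$, again via Theorem~\ref{th:bdy_cross}. Since $g'_{\pm}$ is bounded on $[0,1]$ (because $g'_{\pm}$ is absolutely continuous with bounded derivative a.e.), the boundaries $g_{\pm}$ satisfy the Lipschitz hypothesis~\eqref{eq:lip_assump} of Theorem~\ref{th:bdy_cross}, so its application is legitimate.

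I do not expect a genuine obstacle here: once the interpolation inequality is in hand, the corollary is a direct substitution. The one cosmetic issue is that the constants $\over{B}(\gp-\ep)$ and $\over{C}(\gi+\ep)$ appearing in $C, D$ depend on $\ep$ (and hence on $\gd$), but since $B(g, \cdot)$ and $C(g, \cdot)$ are continuous in the boundary and $\ep \to 0$ as $\gd \to 0$, these constants are uniformly bounded for all sufficiently small $\gd$, so they may be absorbed into fixed $C, D$. Finally, for the uniform partition $t_i = i/n$ we have $\gd = 1/n$, and the displayed estimates become $\tfrac{C}{8}\xi n^{-2}$ and $\tfrac{D}{8}\xi n^{-2}$, giving the claimed $\mathcal O(n^{-2})$ rate.
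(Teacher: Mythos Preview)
Your proposal is correct and follows essentially the same approach as the paper: reduce to Theorem~\ref{th:bdy_cross} via the classical piecewise-linear interpolation bound $\|g_{\pm}-f_{\pm}\|\le \tfrac{1}{8}\xi\gd^2$. The paper's own proof simply cites this interpolation estimate and invokes Theorem~\ref{th:bdy_cross}; you give more detail (justifying the $1/8$, checking the Lipschitz hypothesis, and discussing the mild $\ep$-dependence of the constants), but the argument is the same.
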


    \begin{proof}
        Using Theorem~\ref{th:bdy_cross}, we need only to show that
            \[  \ep = ||f - g|| \leq \frac{1}{8} \xi \gd^2, \]
        the proof of which can be found on p.90 of \cite{Borovkov_etal_0305}.
    \end{proof}

    Note that a similar result holds for the probabilities $\Prbb{\fm}{\fm}$.

\section{Examples}
\label{sec:examples}

    In this section we illustrate the precision of the results from the previous two sections. Our bounds from Theorems~\ref{th:density_bound} and~\ref{th:density_lower_bound} for the first passage time density are compared to some known densities for the Brownian motion, Bessel processes and Ornstein-Uhlenbeck processes. We also illustrate by numerical examples the results of Section~\ref{sec:bcp} and, and in the case of the Brownian motion, compare these with known results from the literature.

    \subsection{Brownian Motion}

    First we consider a Brownian motion starting at zero. Observe that for a linear boundary the bound of Theorem~\ref{th:density_bound} is an exact result. Next we consider a curvilinear Daniels boundary (as it has a known first passage time density) given by
        \begin{align}
        \label{eq:dan_bdy}
            g(t) = \gd - \frac{t}{2\gd} \log \left( \frac{\gk_1}{2} + \sqrt{\frac{1}{4} \gk_1^2 + \gk_2 e^{-4\gd^2/t} } \right),
        \end{align}
    where $\gd \ne 0$, $\gk_1 >0$ and $\gk_2 \in \mathbb{R}$ subject to $\gk_1^2 + 4 \gk_2 >0$. For this boundary, the first passage time density is given by
        \begin{align}
        \label{eq:dan_fpt}
            p_{\tau}(t) = \frac{1}{\sqrt{2 \pi} t^{3/2}} \left( \gd \gk_1 e^{-(g(t) - 2\gd)^2/(2t)} + 2\gd\gk_2 e^{-(g(t) - 4\gd)^2/(2t)}\right)
        \end{align}
    (see \cite{Daniels_0696} for further information). We choose for our example the parameter values $\gd = \gk_1 = \gk_2 = 0.5$. Figure~\ref{fig:dan_bdy} displays graphs of the boundary~\eqref{eq:dan_bdy} and the first passage time density~\eqref{eq:dan_fpt} compared to the upper and lower bounds as calculated by~\eqref{eq:density_bound} and \eqref{eq:density_lower_bound} respectively.

\begin{figure}[!htb]
    \begin{centering}$
        \begin{array}{cc}
            \includegraphics[width = 0.45 \textwidth, height = 2.5 in]{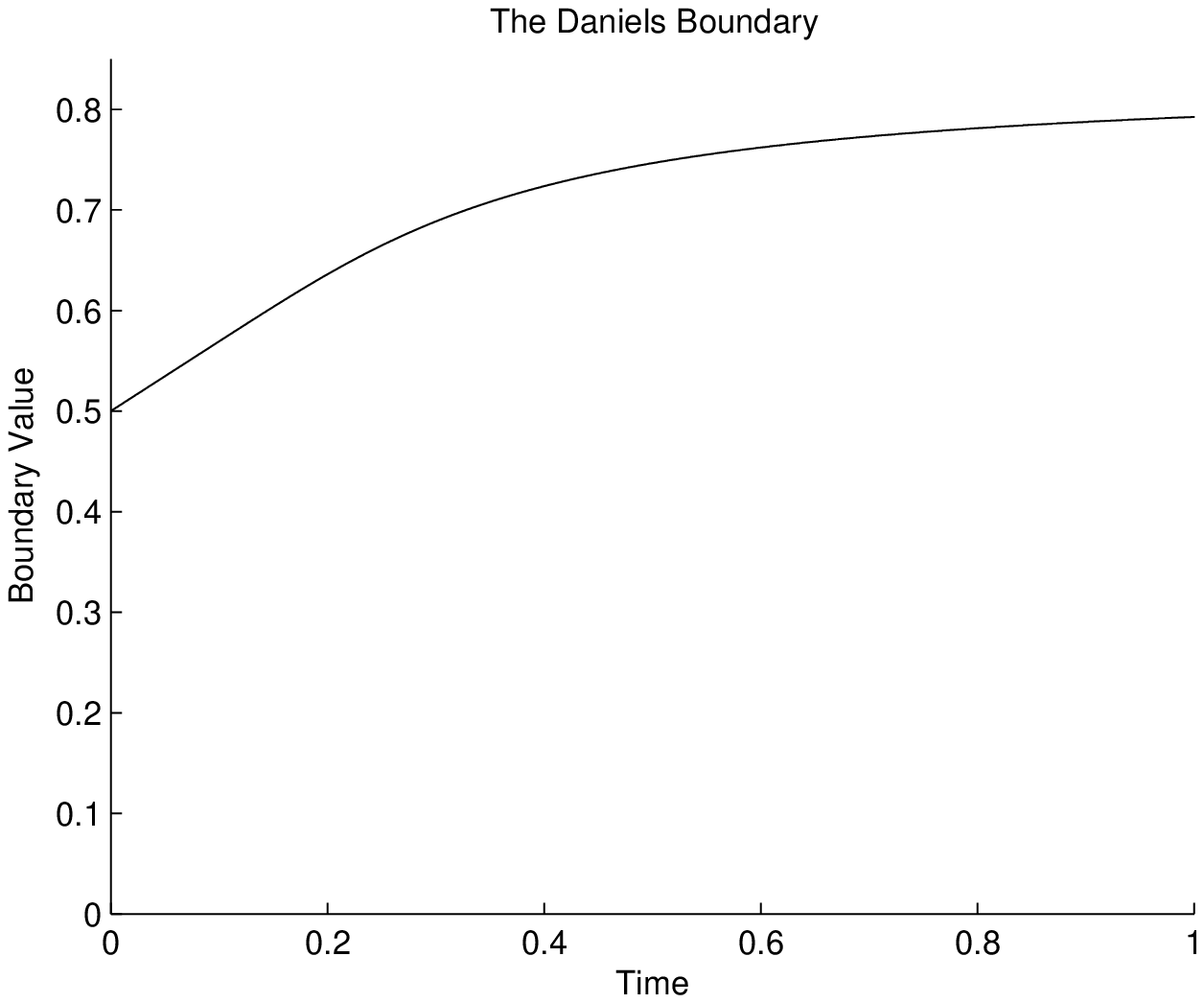} &
            \includegraphics[width = 0.45 \textwidth, height = 2.5 in]{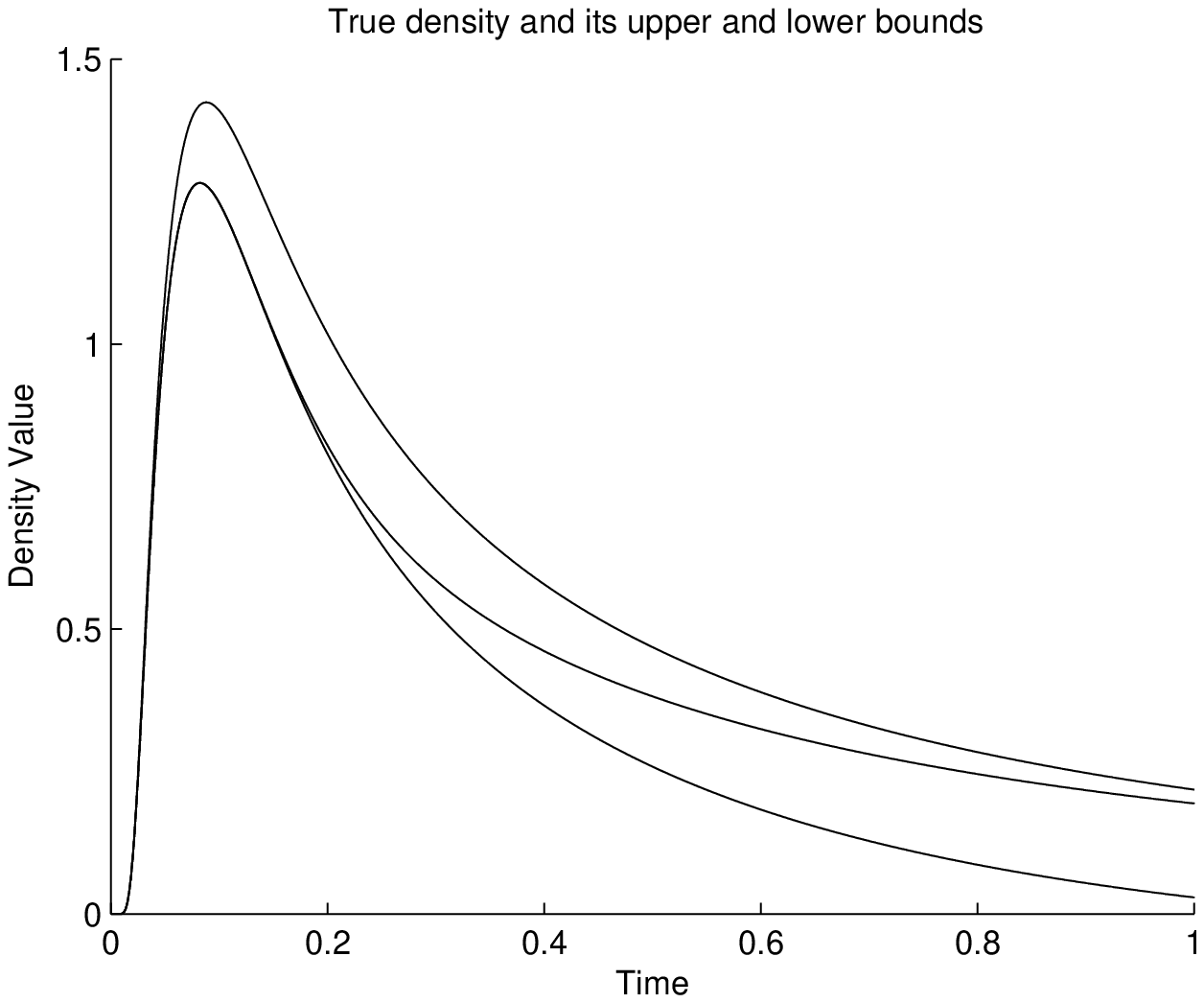}
        \end{array}$
        \caption{The Daniels boundary with $\gd =\gk_1 = \gk_2 = 0.5$, alongside its true first passage time density and upper and lower bounds.}
    \end{centering}
    \label{fig:dan_bdy}
\end{figure}

    We can approximate the Daniels boundary by a linear function on $[0,1]$ with a uniform error of approximately $0.05$, and clearly a piecewise linear boundary will reduce this. The results from Section~\ref{sec:bcp} for an approximating function $f$ with $||f-g|| \leq \ep$ give the bound
        \[  \left|\Prb{-\infty}{g} - \Prb{-\infty}{f}\right| \leq 3.05 \ep \]
    with $\tp \approx 0.555$. By way of comparison, the results in \cite{Borovkov_etal_0305} bound the above difference by $3.73 \ep$.

    Note that choosing different parameter values to make the boundary flatter can increase the accuracy of the result. For example, with the choice $\gd = 1.0, \gk_1 = \gk_2 = 0.9$ we have a maximum relative error for the density upper bound of less than $2.9\%$ and the difference of the boundary crossing probabilities is bounded by $1.7\ep$.

    \subsection{The Bessel Process of Dimension 4}

    For a Bessel process of dimension 4, we choose the straight line boundary $g(t) = 1$, and a starting point $x = 0.5$. The graph of the true first passage time density (given by (2.0.2) in \cite{Borodin_etal_xx02}), as well as the upper bound calculated using Section~\ref{sec:fpt_density}, is shown on a semi-log scale in Figure~\ref{fig:bes_bound}.

\begin{figure}[!htb]
    \begin{centering}
            \includegraphics[width = 0.45 \textwidth, height = 2.5 in]{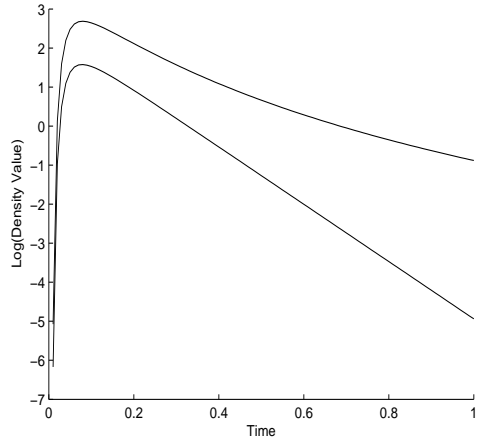}
        \caption{First passage time density for a Bessel process and a straight line boundary, alongside its upper bound on a semi-log scale.}
        \label{fig:bes_bound}
    \end{centering}
\end{figure}

    As mentioned, improvements in Lemma~\ref{lem:BesBridge} will help to improve this result, which anyway gives a useful bound on the error in the boundary crossing probabilities. Using a value of $\tp \approx 0.75$ gives the bound, for an approximating function $f$, with $||f-g|| \leq \ep \leq 0.05$,
        \[  \left|\Prb{-\infty}{g} - \Prb{-\infty}{f}\right| \leq 2.25 \ep. \]

    \subsection{The Ornstein-Uhlenbeck Process}

    Now consider an Ornstein-Uhlenbeck process $S_t$, which satisfies the SDE
        \[  dS_t = -\gt (S_t - \mu) dt + dW_t,  \]
    where $\gt > 0$, $\mu \in \mathbb{R}$. According to \cite{Buonocore_etal_1287}, for the `hyperbolic' boundary
        \[  g(t) = \mu + A e^{-\gt t} + B e^{\gt t},    \]
    where $A, B \in \mathbb{R}$ are such that $x < \mu + A + B$, the first passage time density is given by
        \begin{align}
        \label{eq:OU_true}
            p_{\tau}(t) = 2 \gt \frac{ \left| A + B - x + \mu \right|}{e^{\gt t} - e^{-\gt t}}p(t, x, g(t)),
        \end{align}
    where $p(t, x, z)$ is the transition density of the process $S_t$:
        \[  p(t,x,z) = \left( \frac{\gt e^{2\gt t}}{\pi (e^{2\gt t}-1)}\right)^{1/2} \exp\left(\frac{\gt\left((z -\mu)e^{\gt t} - (x-\mu)\right)^2}{1-e^{2\gt t}} \right).  \]
    For the parameter choice $\gt = 0.5$, $\mu = 2$, $x = 1$, $A = 1$ and $B = -1$, Figure~\ref{fig:OU_bound} displays the true first passage time density \eqref{eq:OU_true} compared to the upper bound calculated by \eqref{eq:density_bound}.

\begin{figure}[!htb]
    \begin{centering}
            \includegraphics[width = 0.45 \textwidth, height = 2.5 in]{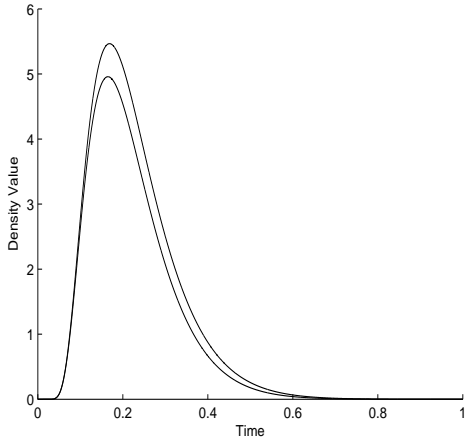}
        \caption{First passage time density of a hyperbolic boundary for an Ornstein-Uhlenbeck process, alongside its upper bound.}
        \label{fig:OU_bound}
    \end{centering}
\end{figure}

    Although this process is an example of case [B], we are unable to obtain a meaningful lower bound as $\LM$ is unbounded.

    Using the results of Section~\ref{sec:bcp} with $\tp \approx 0.28$ gives the bound
        \[  \left|\Prb{-\infty}{g} - \Prb{-\infty}{f}\right| \leq 1.88 \ep, \]
    when $||f-g|| \leq \ep \leq 0.05$.

    {\bf Acknowledgement:} This research was supported by the ARC Centre of Excellence for Mathematics and Statistics of Complex Systems.

\end{document}